\newcommand{\ran}{\operatorname{ran}}
\newcommand{\spncl}{\overline{\operatorname{span}}}
\newcommand{\C}{\mathbb{C}}
\newcommand{\Z}{\mathbb{Z}}
\newcommand{\R}{\mathbb{R}}
\newcommand{\D}{{\mathbb D}}
\newcommand{\Dc}{\overline{\mathbb D}}
\newcommand{\B}{\mathscr{B}}
\renewcommand{\S}{\mathscr S}
\renewcommand{\H}{\mathcal{H}}
\newcommand{\K}{\mathcal{K}}
\newcommand{\E}{\mathcal{E}} 
\newcommand{\M}{\mathcal{M}}
\newcommand{\Q}{\mathcal Q}
\newcommand{\BH}{\mathscr{B}(\mathcal{H})}
\newcommand{\z}{\mathbf{z}}
\newcommand{\V}{\mathscr V}
\newcommand{\Tc}{\mathscr T}
\newcommand{\MzE}{M_{\z^\E}}
\newcommand{\CE}{\mathcal{C}_\E}
\newcommand{\HDE}{H^2(\D,\E)}
\newcommand{\Hinf}{H^\infty(\D,\B(\E))}
\newcommand{\X}{\mathfrak{X}}
\newcommand{\Wc}{\mathfrak W}
\newcommand{\Zc}{\mathfrak{Z}}
\newcommand{\la}{ \langle }
\newcommand{\ra}{\rangle}
\DeclareMathOperator{\slim}{s\!\cdot\! lim}
\newtheorem{thm}{Theorem}[section]
\newtheorem{corollary}[thm]{Corollary}
\newtheorem{lemma}[thm]{Lemma}
\newtheorem{definition}[thm]{Definition}
\newtheorem{remark}[thm]{Remark}
\newtheorem{example}[thm]{Example}
\numberwithin{equation}{section}
\def\textmatrix#1&#2\\#3&#4\\{\bigl({#1 \atop #3}\ {#2 \atop #4}\bigr)}
\def\dispmatrix#1&#2\\#3&#4\\{\left({#1 \atop #3}\ {#2 \atop #4}\right)}
\numberwithin{equation}{section}
\def\textmatrix#1&#2\\#3&#4\\{\bigl({#1 \atop #3}\ {#2 \atop #4}\bigr)}
\def\dispmatrix#1&#2\\#3&#4\\{\left({#1 \atop #3}\ {#2 \atop #4}\right)}
\begin{document}
	
	\title[On pure contractive semigroups]{On pure contractive semigroups}

	\author{Shubham Rastogi}

    \author{Vijaya Kumar U}
	
		\newcommand{\Addresses}{{
			\bigskip
			\footnotesize
			
		S. ~Rastogi, \textsc{Department of Mathematics, Indian Institute of Technology Bombay, Mumbai 400076, India.}  \par\nopagebreak \textit{E-mail address}: \texttt{shubhamr@math.iitb.ac.in}
		
		\medskip
	
	 Vijaya Kumar U, \textsc{Department of Mathematics, Indian Institute of Science, Bangalore 560012.}\par\nopagebreak	\textit{E-mail address}: \texttt{vijayak@iisc.ac.in}.

	}}
	
	\maketitle
	
	\renewcommand{\thefootnote}{\fnsymbol{footnote}}
	
	\footnotetext{MSC: Primary:  47A20, 47A45, 47A65, 47D03. Secondary: 47B91.\\
		Keywords: Pure contractive semigroups, the shift semigroup, commutant, doubly commuting, dilation.}
	
	\begin{abstract}
		We find the commutant of a pure contractive semigroup on a Hilbert space. We demonstrate that any tuple of doubly commuting pure contractive semigroups can be dilated to a tuple of doubly commuting pure isometric semigroups. En route, we obtain a complete model for the tuples of doubly commuting isometric semigroups. 		
	\end{abstract}
	
	\section{Introduction}\label{Sec-Intro}
	It is well known that any contraction $T$ on a Hilbert space has a unique minimal isometric dilation. If $T$ is a pure contraction, then its minimal isometric dilation is the multiplication operator $M_{\z^{\mathcal D_{T^*}}},$ by the coordinate function $\z^{\mathcal D_{T^*}}$ on the vector-valued Hardy space $H^2(\D,\mathcal D_{T^*}),$ where $\mathcal D_{T^*}$ is the  Hilbert space $\overline{\ran } (I-TT^*).$ This means there exists a ${(M_{\z^{\mathcal D_{T^*}}})}^*$-invariant subspace $\Q$ in $H^2(\D,\mathcal D_{T^*})$ such that $T$ is unitarily equivalent to the compression $P_\Q M_{\z^{\mathcal D_{T^*}}}|_\Q$, and $\spncl\{{(M_{\z^{\mathcal D_{T^*}}})}^n(\Q):n\in \Z_+\}=H^2(\D,\mathcal D_{T^*})$; see \cite{Nagy-Foias-paper, Nagy-Foias, Bhat-Bhattacharyya}. Analogously, any one-parameter pure contractive semigroup is unitarily equivalent to  a compression of the right shift semigroup $\S^\E$ on a ${(\S^\E)}^*$-invariant subspace, for some Hilbert space  $\E$ (see \cref{Sec-model-pure-cont-sgp} for the details). 
	 Let $\Hinf$ denote the algebra of all bounded analytic functions from the unit disc $\D$ to  $\B(\E)$ (the set of all bounded linear operators on $\E$). We simply write $H^\infty(\D)$ instead of $H^\infty(\D,\B(\C)).$ The result by Sarason in \cite{Sarason} states that any contraction which  commutes with   $\MzE$ (the multiplication operator by the coordinate function $\z^\E$)  on $\HDE$ is given by $M_\psi$, for some $\psi\in \Hinf$ with $\norm{\psi}_\infty \leq 1$. Cooper's result (\cite{Cooper}), shows that the role of $\MzE$ in the Wold decomposition of an isometry is played by the right shift semigroup for a semigroup of isometries. A result analogous to Sarason's commutant theorem for the right shift semigroup is established in \cite{Fact}. 
	
The contractive commutant of a pure contraction can be described using the commutant lifting theorem (\cite{Nagy-Foias-CLT,Nagy-Foias}) and the Sarason's result (\cite{Sarason}) as follows: Let $\Q$ be an invariant subspace of ${(\MzE)}^*$ in $H^2(\D,\E).$ Then, for the pure contraction $P_\Q \MzE|_\Q$, its contractive commutant is given by:
\begin{equation}\label{pure-cont-comm}
(P_\Q \MzE|_\Q)'=\left\{ P_\Q M_{\psi}|_\Q:\psi\in \Hinf,\norm{\psi}_\infty\le 1\text{ and } M_\psi^*(\Q)\subseteq \Q\right\}.
\end{equation}
 In this note, we describe all the contractive semigroups which commute with a pure contractive semigroup. As a consequence, we get a model for the tuples of commuting contractive semigroups, where one of them is pure (see \cref{Sec-Comm-pure-cont-sgp}). 
 
 Ando's theorem (\cite{Ando}) states that any pair of commuting contractions possesses a dilation to a pair of commuting isometries. Such a dilation does not exist  for $n$-tuples of commuting contractions, in general for $n\ge 3$; see \cite{Var}. However, under certain additional assumptions, it is possible for such dilations to exist; see for example \cite{Brehmer,Curto-Vasilescu,Nagy-Foias,Barik-TAMS,Barik}. In \cite{Curto-Vasilescu}, it is shown that any tuple of doubly commuting pure contractions can be dilated to a tuple of doubly commuting pure isometries (see also \cite{TB-EKN-JS}). 
 
 In this paper, in \cref{Sec-DC-Dil}, we establish that any tuple of doubly commuting pure contractive semigroups can be dilated to a tuple of doubly commuting pure isometric semigroups, and that  the obtained dilation is minimal. \cref{Sec-DC-Dil} also presents a complete model for $n$-tuples of doubly commuting isometric semigroups. Although the model for a certain class in the case $n=2$ is described in \cite{DCDDC}, we present the generalization here because the techniques employed differ from those in \cite{DCDDC}. The tuples of commuting unitary semigroups constitute a subclass of the tuples of doubly commuting isometric semigroups (by \cite{Fug}). In \cref{Sec-model-normal-sgp}, we utilize the spectral theorem to establish a model for these tuples. More generally, a model for the tuples of commuting normal contractive semigroups is presented.\\
 
In the remainder of the introduction, we will present the necessary definitions, terminologies, and some well-known results that are frequently used but may not be explicitly mentioned in the main sections.

All Hilbert spaces considered in this note are separable and over the complex field $\C.$ The semigroups considered are one-parameter strongly continuous semigroups consisting of contractions on a Hilbert space. 
Let $\Tc=(T_t)_{t\ge 0}$ be a contractive semigroup on a Hilbert space $\H.$ If $A$ is the generator of $\Tc,$ then $1$ belongs to the resolvent of $A,$ i.e., $(A-I)^{-1}$ is a bounded operator on $\H.$ The Cayley transform $T:=(A+I)(A-I)^{-1}$ of $A$  is called the cogenerator of $\Tc.$ The cogenerator $T$ is a contraction and it does not have $1$ as an eigenvalue.  
Conversely, if 
$T$ is a contraction,  which does not have $1$ as an eigenvalue, then it is the cogenerator of a contractive semigroup. In fact, for such a contraction the strong limit $\slim_{r\to 1^{-}} \varphi_t(rT)$ exists for all $t\ge0,$ where $\varphi_t\in H^\infty(\D)$ is the singular inner function defined as $\varphi_t(z)=e^{t\frac{z+1}{z-1}}$ for $z\in \D.$
 Define $\varphi_t(T)$ as
\begin{equation*}
	\varphi_t(T):=\slim_{r\to 1^-}\varphi_t(rT). 
\end{equation*}
Then, $(\varphi_t(T))_{t\ge 0}$ is the semigroup of contractions with cogenerator $T.$

As any semigroup $\Tc$ is uniquely determined by its cogenerator $T$, we have $T_t= \varphi_t(T).$
We can determine the cogenerator $T$ of a semigroup $\Tc$ without explicitly delving into the generator, viz \begin{equation*}
	T=\slim_{t\to 0^+}\phi_t(T_t)
\end{equation*}
where  $\phi_t\in H^\infty(\D)$ is given by $\phi_t(z)=\frac{z-1+t}{z-1-t}$ for $z\in \D$ and $t\ge 0.$

Let $\Tc=(T_t)_{t\ge 0}$ be a semigroup. It is said to be  {\em self-adjoint, normal, unitary, or an isometric semigroup} if  each $T_t$ is  self-adjoint, normal, unitary, or an isometry, respectively.
	
Let $\Tc=(T_t)_{t\ge 0}$ be a contractive semigroup on $\H$. Then $\Tc$ is said to be  {\em pure} if $T_t^*h\to 0 $ as $t\to \infty$ for all $h\in \H.$

If $\Tc=(T_t)_{t\ge 0}$ is a contractive semigroup with the cogenerator $T.$ Then 
\begin{enumerate}
	\item $\Tc^*=(T_t^*)_{t\ge 0}$ is the contractive semigroup with  cogenerator $T^*.$ 
	\item \label{tnh}
		$\lim_{t\to \infty}\norm{T_t^*h}=\lim_{n\to \infty}\norm{{(T^*)}^nh}\text{ for all }h\in \H.$
	\item $\Tc$ is pure if and only if $T$ is pure (this follows from \cref{tnh}).
    \item  $\Tc$ is self-adjoint, normal, unitary, or an isometric semigroup if and only if $T$ is self-adjoint, normal, unitary, or an isometry, respectively.
\end{enumerate}
We refer the reader to \cite{Nagy-Foias} for details of the above. 


A tuple $(T_1,T_2,...,T_n)$ of commuting bounded operators on $\H$ is said be {\em doubly commuting} if 
\[T_iT_j^*=T_j^*T_i\text{ for all }i,j\in\{1,2,...,n\}\text{ and }i\ne j.\]

 Let $\Tc_1=(T_{1,t})_{t\ge 0}$ and $\Tc_2=(T_{2,t})_{t\ge 0}$ be two semigroups on a Hilbert space. They are said to be {\em commuting} if \begin{equation*}
 	T_{1,t}T_{2,t}=T_{2,t}T_{1,t} \text{ for all }t\ge 0.
 \end{equation*}
 
An $n$-tuple $(\Tc_1,\Tc_2,...,\Tc_n)$ of  semigroups is said to be {\em doubly commuting} if 
 $\Tc_1,\Tc_2,...,\Tc_n$ are  commuting and 
 \begin{equation*}
 	T_{i,t}T_{j,t}^*=T_{j,t}^*T_{i,t} \text{ for all }t\ge 0, 
 \end{equation*}
where $i,j\in \{1,2,...,n\}$ and $i\ne j.$ An $n$-tuple of semigroups is (doubly) commuting if and only if their cogenerators are (doubly) commuting. 

Let $\H$ be a Hilbert space and $(\Tc_1,\Tc_2,...,\Tc_n)$ be a tuple of semigroups on $\H.$ A closed subspace $\H_0$  of $\H$ is said to be a joint invariant/reducing subspace for $(\Tc_1,\Tc_2,...,\Tc_n)$ if $\H_0$ is invariant/reducing for each $\Tc_j,$ i.e., $\H_0$ is a invariant/reducing subspace for $T_{j,t}$ for each $t\ge 0$ and $ j=1,2,...,n.$

A tuple $(\Tc_1,\Tc_2,...,\Tc_n)$ of contractive semigroups on $\H$ is said to be {\em completely non-unitary (c.n.u.)} if there is no non-zero subspace $\H_0$ of $\H$ so that it is jointly reducing for $(\Tc_1,\Tc_2,...,\Tc_n)$ and $\Tc_j|_{\H_0}$ is a unitary semigroup for each $j=1,2,...,n.$

An $n$-tuple $(\V_1,\V_2,...,\V_n)$ of semigroups  on $\H$ is said to be {\em jointly unitarily equivalent} to an $n$-tuple $(\Wc_1,\Wc_2,...,\Wc_n)$ of semigroups on $\K$ if there is a unitary $\mathcal{U}:\H\to \K$ such that
\begin{equation*}
	W_{j,t}=\mathcal U V_{j,t}\mathcal U^*\text{ for all }t\ge 0, j=1,2,...,n,
\end{equation*}
where $\V_j=(V_{j,t})_{t\ge 0}$ and $\Wc_j=(W_{j,t})_{t\ge 0}.$

\section{Model for pure contractive semigroups}\label{Sec-model-pure-cont-sgp}
 In \cite{Deddens}, a model for contractive semigroups is provided. In this section, we establish a model that bears analogy to Sz-Nagy's model of a pure contraction. We start with a brief introduction to the right shift semigroup.
 
 Let $\E$ be a Hilbert space. The right shift semigroup $\S^\E=(S^\E_t)_{t\ge 0}$  on $L^2(\R_+,\E)$ is defined by 
 \[(S_t^\E f)x=\begin{cases}
 	f(x-t) &\text{if } x\ge t,\\
 	0 & \text{otherwise,}
 \end{cases}\]
 for $f\in L^2(\R_+,\E).$ The right shift semigroup is a c.n.u. semigroup of isometries.  J. L. B. Cooper (in \cite{Cooper}) showed that any c.n.u. semigroup of isometries is unitarily equivalent to the  right shift semigroup $\S^\E$ for some Hilbert space $\E.$ In particular, an isometric semigroup being c.n.u. is equivalent to it being pure.
 
 Let $S^\E$ denote the  cogenerator  of the right shift semigroup $\S^\E.$ Then $S^\E$ is a pure isometry.    An explicit expression of $S^\E$ can be found in \cite[p. 153]{Nagy-Foias}.
 Let $\z^\E\in \Hinf$  denote the operator valued function  $\z^\E(w)=wI_\E$  for $w\in \D$ where $I_\E$ denotes the identity operator on $\E.$ Define the unitary  $W_\E:L^2(\R_+,\E)\to \HDE$   by
 \begin{equation}\label{shift}
 		W_\E({(S^\E)}^n(\sqrt{2}e^{-x}\xi))=\xi\z^n\text{ for } \xi\in \E, n\ge 0.
 \end{equation}
 Then $W_\E S^\E W_\E^*=\MzE,$ that is, the cogenerators $S^\E$ and $\MzE$ are unitarily equivalent by the unitary $W_\E.$ Hence the corresponding semigroups $\S^\E$ and $(M_{\varphi_t\circ \z^\E})_{t\ge 0}$ are equivalent by the unitary $W_\E$, that is,
 	$W_\E S_t^\E W_\E^*=M_{\varphi_t\circ \z^\E} \text{ for all } t\ge 0,$
 where $\varphi_t\in H^\infty(\D)$ is the singular inner function  $\varphi_t(w)=e^{t\frac{w+1}{w-1}}$  and $\varphi_t\circ \z^\E(w):=\varphi_t(\z^\E(w))$ for $w\in \D.$ See \cite{Fact} for the details of the above. Since $S^\E$ is a pure isometry, it exhibits numerous invariant subspaces by Beurling-Lax-Halmos Theorem. When $\E=\C$, we typically omit the superscript/subscript $\E$.
  The following lemma is straightforward from the relationship between a semigroup and its cogenerator, and it could be encountered in the literature.
  
\begin{lemma}\label{inv/red}  Let $\Tc$ be a contractive semigroup on $\H$ and $T$ be the cogenerator of $\Tc.$ Let $\H_0 $ be a closed subspace of $\H.$ Then, $\H_0$ is an invariant/reducing subspace of $\Tc$ if and only if $\H_0$ is an invariant/reducing subspace of $T.$
Furthermore, if $\H_0$ is invariant under $\Tc$ (equivalently, under $T$), $\Tc|_{\H_0}:=(T_t|_{\H_0})_{t\ge 0}$ is the contractive semigroup with the cogenerator $T|_{\H_0}.$
 \end{lemma}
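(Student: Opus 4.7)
The plan is to exploit the two symmetric strong-limit identities relating a contractive semigroup and its cogenerator stated in the introduction: $T_t=\slim_{r\to 1^-}\varphi_t(rT)$ and $T=\slim_{t\to 0^+}\phi_t(T_t)$. The key observation is that in each identity, the operator on the right is a norm-convergent power series in the operator on the left. Indeed, $w\mapsto\varphi_t(rw)$ is analytic on the disc $|w|<1/r$, so its Taylor series at $0$ converges absolutely for $|w|\le 1$ and yields $\varphi_t(rT)=\sum_n c_n(r,t)\,T^n$ in the operator norm; similarly $\phi_t$ is analytic on a neighborhood of $\Dc$ (its only pole is at $1+t$), so $\phi_t(T_t)$ is a norm-convergent power series in $T_t$. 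Since $\H_0$ is closed, invariance passes through both norm-convergent power series in a fixed operator and through strong limits.

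For the invariant equivalence, if $\H_0$ is $T$-invariant, then every $T^n$ preserves $\H_0$, hence so does $\varphi_t(rT)$; taking $r\to 1^-$ strongly then gives $T_t\H_0\subseteq\H_0$ for every $t\ge 0$. Conversely, if $\H_0$ is invariant under every $T_t$, then for each $t>0$ the operator $\phi_t(T_t)$ preserves $\H_0$, and the strong limit as $t\to 0^+$ yields $T\H_0\subseteq\H_0$. The reducing case follows by applying the invariant equivalence both to $\Tc$ and to $\Tc^*$ (whose cogenerator is $T^*$), since in either setting reducing is the same as being invariant together with the adjoint.

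For the \emph{furthermore} clause, suppose $\H_0$ is invariant under $\Tc$. Then $\Tc|_{\H_0}=(T_t|_{\H_0})_{t\ge 0}$ is a contractive semigroup on $\H_0$, and the power-series representation of $\phi_t(T_t)$ gives $\phi_t(T_t)|_{\H_0}=\phi_t(T_t|_{\H_0})$. Passing to the strong limit as $t\to 0^+$ shows that the cogenerator of $\Tc|_{\H_0}$ equals $T|_{\H_0}$. I do not foresee any genuine obstacle; the only mild subtlety is the norm-convergent power-series justification above, and that is immediate from the location of the singularities of $\varphi_t$ and $\phi_t$ relative to the unit disc.
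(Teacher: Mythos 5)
Your argument is correct, and it fleshes out exactly the route the paper intends: the paper gives no proof of this lemma, stating only that it is ``straightforward from the relationship between a semigroup and its cogenerator,'' which is precisely the pair of strong-limit formulas $T_t=\slim_{r\to 1^-}\varphi_t(rT)$ and $T=\slim_{t\to 0^+}\phi_t(T_t)$ that you use. Your key observations are all sound: $\varphi_t(r\,\cdot)$ has Taylor radius $1/r>1$ and the pole of $\phi_t$ sits at $1+t$ outside $\Dc$, so both operators are norm-convergent power series in $T$ resp.\ $T_t$; invariance of a closed subspace passes through such series and through strong limits; the reducing case follows since $\Tc^*$ has cogenerator $T^*$; and the identity $\phi_t(T_t)|_{\H_0}=\phi_t(T_t|_{\H_0})$ gives the ``furthermore'' clause.
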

 Since $(M_{\varphi_t\circ \z^\E})_{t\ge 0}$ is the contractive semigroup with cogenerator $\MzE,$  it follows that $((M_{\varphi_t\circ \z^\E})^*)_{t\ge 0}$ is the contractive semigroup with  cogenerator ${(\MzE)}^*.$ The following lemma provides a collection of examples of pure contractive semigroups.
 \begin{lemma}\label{lem:comp-sgp}
 Let $\E$ be a Hilbert space and  $\Q$ be an invariant subspace for ${(\MzE)}^*$ in  $H^2(\D,\E).$ Then $(P_\Q M_{\varphi_t\circ \z^\E}|_\Q)_{t\ge 0}$ is a pure contractive semigroup, and its cogenerator is $P_\Q \MzE|_\Q.$
 \end{lemma}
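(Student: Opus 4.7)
The plan is to work with the adjoint semigroup $((M_{\varphi_t\circ \z^\E})^*)_{t\ge 0}$, for which $\Q$ is a genuine invariant subspace, apply \cref{inv/red} there, and then pass back to the compression by taking Hilbert-space adjoints on $\Q$. The assertion about purity will then reduce to the well-known fact that $\MzE$ is a pure isometry.

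\medskip

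First I would observe that the remarks immediately preceding \cref{lem:comp-sgp} identify $((M_{\varphi_t\circ \z^\E})^*)_{t\ge 0}$ as the contractive semigroup whose cogenerator is $(\MzE)^*$. Since $\Q$ is $(\MzE)^*$-invariant by hypothesis, \cref{inv/red} applied to this adjoint semigroup says that $\Q$ is invariant under each $(M_{\varphi_t\circ \z^\E})^*$ and that the restriction $((M_{\varphi_t\circ \z^\E})^*|_\Q)_{t\ge 0}$ is a contractive semigroup on $\Q$ whose cogenerator is $(\MzE)^*|_\Q$. A short pairing argument $\la q_1,(M_{\varphi_t\circ \z^\E})^*|_\Q q_2\ra=\la M_{\varphi_t\circ \z^\E}q_1,q_2\ra=\la P_\Q M_{\varphi_t\circ \z^\E}|_\Q q_1,q_2\ra$ identifies the adjoint of $(M_{\varphi_t\circ \z^\E})^*|_\Q$ on $\Q$ as the compression $P_\Q M_{\varphi_t\circ \z^\E}|_\Q$. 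Invoking property (1) of the introductory list (that the adjoint of a contractive semigroup has the adjoint of the cogenerator as its own cogenerator), I conclude that $(P_\Q M_{\varphi_t\circ \z^\E}|_\Q)_{t\ge 0}$ is a strongly continuous contractive semigroup on $\Q$ with cogenerator $((\MzE)^*|_\Q)^{*}=P_\Q \MzE|_\Q$.

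\medskip

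For purity, I would use property (3) of the introductory list: the semigroup is pure iff its cogenerator $P_\Q\MzE|_\Q$ is a pure contraction. Because $\Q$ is $(\MzE)^*$-invariant, $(P_\Q\MzE|_\Q)^{*}=(\MzE)^*|_\Q$, so purity of the cogenerator reduces to showing $((\MzE)^*)^{n}q\to 0$ for every $q\in\Q$. But $\MzE$ is the pure isometry on $\HDE$ (as recalled just before the lemma via Cooper's theorem and the unitary $W_\E$), so $((\MzE)^*)^{n}h\to 0$ for every $h\in \HDE$, and in particular for $h\in\Q$.

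\medskip

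I do not expect a serious obstacle: the only thing that needs a moment of care is bookkeeping about which object is invariant under which semigroup, so that \cref{inv/red} is applied to $((M_{\varphi_t\circ \z^\E})^*)_{t\ge 0}$ rather than to $(M_{\varphi_t\circ \z^\E})_{t\ge 0}$, and that the adjoint of a restriction equals the compression of the original. Everything else — contractivity, strong continuity, and purity — is inherited from the ambient semigroup and the purity of $\MzE$.
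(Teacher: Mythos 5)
Your proposal is correct and follows essentially the same route as the paper: apply \cref{inv/red} to the adjoint semigroup $((M_{\varphi_t\circ \z^\E})^*)_{t\ge 0}$ (whose cogenerator is $(\MzE)^*$), restrict to the invariant subspace $\Q$, pass to adjoints to identify the compression semigroup and its cogenerator $P_\Q\MzE|_\Q$, and deduce purity from purity of this cogenerator. Your explicit pairing argument and the reduction of purity to $((\MzE)^*)^n q\to 0$ merely spell out steps the paper leaves implicit.
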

\begin{proof}
  Let $\Q$ be an invariant subspace of ${(\MzE)}^*$ in  $H^2(\D,\E).$  Then by \cref{inv/red}, $\Q$ is invariant under $((M_{\varphi_t\circ \z^\E})^*)_{t\ge 0}.$ By \cref{inv/red}, $((M_{\varphi_t\circ \z^\E})^*|_\Q)_{t\ge 0}$ is the contractive semigroup with the cogenerator ${(\MzE)}^*|_\Q.$ Therefore, $(P_\Q M_{\varphi_t\circ \z^\E}|_\Q)_{t\ge 0}$ is the contractive semigroup with cogenerator $P_\Q \MzE|_\Q.$ The semigroup $(P_\Q M_{\varphi_t\circ \z^\E}|_\Q)_{t\ge 0}$ is pure because $P_\Q \MzE|_\Q$ is pure.
 \end{proof}
  
  Let $\Q\subseteq H^2(\D,\E).$  Then,  $\Q$ is an invariant subspace for ${(\MzE)}^*$ if and only if $W_\E^*(\Q)$ is an invariant subspace for ${(S^\E)}^*,$ where $W_\E$ is as defined in \cref{shift}.
  
  Let $\Q$ be an invariant subspace for ${(\MzE)}^*$ and $\M=W_\E^*(\Q).$ Let
  $\Gamma_\Q:\M\to \Q$ be the unitary $\Gamma_\Q:=W_\E|_{\M}.$ Then $\Gamma_\Q {(S^\E)}^*|_{\M}  \Gamma_\Q^*={(\MzE)}^*|_\Q.$ Therefore, by \cref{inv/red} and \cite[Lemma 2.4]{Fact}, $\Gamma_\Q {(S_t^\E)}^*|_{\M}  \Gamma_\Q^*={(M_{\varphi_t\circ \z^\E})}^*|_\Q$ for all $t\ge 0.$ This implies that 
  \[\Gamma_\Q [P_{\M} S_t^\E|_{\M}]  \Gamma_\Q^*=P_\Q M_{\varphi_t\circ \z^\E}|_\Q\text{ for all }t\ge 0.\]
  
  The above discussion shows that (i) $(P_\M S_t^\E|_\M)_{t\ge 0}$ is a pure contractive semigroup if $\M$ is an invariant subspace for ${(S^\E)}^*$ and (ii) A semigroup of the form $(P_\Q M_{\varphi_t\circ \z^\E}|_\Q)_{t\ge 0}$ where $\Q$ is an invariant subspace for $\MzE^*,$ is unitarily equivalent to a semigroup $(P_\M S_t^\E|_\M)_{t\ge 0}$ for some ${(S^\E)}^*$ invariant subspace  $\M,$ and vice versa.

\begin{thm}\label{pure-semigp-model}
Let $\Tc$ be a contractive semigroup on a Hilbert space $\H.$
The following are equivalent:
	\begin{enumerate}
		\item $\Tc$ is pure.
		\item There exist a Hilbert space $\E$ and an invariant subspace $\Q$ for ${(\MzE)}^*$ in $H^2(\D,\E)$ such that $\Tc$ is unitarily equivalent to $(P_\Q M_{\varphi_t\circ \z^\E}|_\Q)_{t\ge 0}.$ 
	    \item There exist a Hilbert space $\E$ and an invariant subspace $\M$ for ${(S^\E)}^*$ in $L^2(\R_+,\E)$ such that $\Tc$ is unitarily equivalent to $(P_\M S_t^\E|_\M)_{t\ge 0}.$
\end{enumerate}
\end{thm}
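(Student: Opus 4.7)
The plan is to deduce the theorem by passing between the semigroup and its cogenerator, and then invoking the classical Sz.-Nagy model for pure contractions. The key observation is that unitary equivalence intertwines cogenerators and semigroups in both directions: if $\mathcal U T \mathcal U^* = T'$ for contractions $T, T'$, then $\mathcal U \varphi_t(T) \mathcal U^* = \varphi_t(T')$ for all $t \ge 0$, because the strong limit defining $\varphi_t$ is preserved under unitary conjugation. Combined with the fact (stated in the introduction) that a contractive semigroup is determined by its cogenerator and that $\Tc$ is pure if and only if its cogenerator is pure, this reduces everything to the cogenerator level.

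First I would handle the equivalence (2) $\Leftrightarrow$ (3), which is already essentially written in the paragraph preceding the theorem: the unitary $\Gamma_\Q = W_\E|_\M$ intertwines $(P_\M S_t^\E|_\M)_{t \ge 0}$ with $(P_\Q M_{\varphi_t \circ \z^\E}|_\Q)_{t \ge 0}$ when $\Q = W_\E(\M)$, so I would just record this observation.

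Next, (2) $\Rightarrow$ (1) is immediate from \cref{lem:comp-sgp}, which asserts that any semigroup of the form $(P_\Q M_{\varphi_t \circ \z^\E}|_\Q)_{t \ge 0}$ is pure; since purity is preserved under unitary equivalence, (1) follows.

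The main content is therefore (1) $\Rightarrow$ (2). I would argue as follows. Let $T$ be the cogenerator of $\Tc$. Since $\Tc$ is pure, $T$ is a pure contraction. By the Sz.-Nagy model recalled in the introduction, with $\E = \mathcal D_{T^*}$, there exists an invariant subspace $\Q$ for ${(\MzE)}^*$ in $H^2(\D,\E)$ and a unitary $\mathcal U \colon \H \to \Q$ such that $\mathcal U T \mathcal U^* = P_\Q \MzE|_\Q$. By \cref{lem:comp-sgp}, $P_\Q \MzE|_\Q$ is the cogenerator of the pure contractive semigroup $(P_\Q M_{\varphi_t \circ \z^\E}|_\Q)_{t \ge 0}$. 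Applying the unitary $\mathcal U^*$ and using the functional-calculus identity $\mathcal U \varphi_t(T) \mathcal U^* = \varphi_t(\mathcal U T \mathcal U^*)$, we conclude that $\mathcal U T_t \mathcal U^* = P_\Q M_{\varphi_t \circ \z^\E}|_\Q$ for every $t \ge 0$, giving (2).

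I do not foresee a real obstacle: once the reduction to the cogenerator is in place, Sz.-Nagy's model does all the work. The only subtle point is the commutation of unitary conjugation with the strong limit defining $\varphi_t(T)$, but this is standard and could even be phrased as: a semigroup is unitarily equivalent to another if and only if their cogenerators are. So the proof should be short and essentially a bookkeeping argument assembling \cref{inv/red}, \cref{lem:comp-sgp}, the discussion about $W_\E$, and the Sz.-Nagy model.
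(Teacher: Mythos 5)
Your proposal is correct and takes essentially the same route as the paper: (2)$\Leftrightarrow$(3) and (2)$\Rightarrow$(1) are read off from the discussion preceding the theorem together with \cref{lem:comp-sgp}, and (1)$\Rightarrow$(2) is obtained by passing to the pure cogenerator, invoking the Sz.-Nagy model (with $\E=\mathcal D_{T^*}$), and transferring the unitary equivalence back to the semigroups. The ``functional-calculus'' step you flag, namely that unitary equivalence of cogenerators yields unitary equivalence of the semigroups, is exactly what the paper cites as \cite[Lemma 2.4]{Fact}, so no gap remains.
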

\begin{proof}
	The equivalence between (2) and (3), as well as the implication from (2) to (1), has been previously discussed above the theorem's statement. We now proceed to demonstrate that (1) implies (2).
	
	 Let $\Tc$ be a pure contractive semigroup on $\H.$ Let $T$ be the cogenerator of $\Tc.$ Then, $T$ is a pure contraction. Therefore, there exist a Hilbert space $\E$ and an invariant subspace $\Q$ of ${(\MzE)}^*$ in $H^2(\D,\E)$ such that $T$ is unitarily equivalent to $P_\Q\MzE |_\Q.$ 
	 Hence, by \cref{lem:comp-sgp} and \cite[Lemma 2.4]{Fact},  $\Tc$ is unitarily equivalent to $(P_\Q M_{\varphi_t\circ \z^\E}|_\Q)_{t\ge 0}.$
  \end{proof}
We remark that, in the equivalence between (2) and (3) of the above theorem, the Hilbert space $\E$ can be chosen to be the same space. Moreover, in the proof of (1) implies (2), $\E$ can be taken as  $\mathcal D_{T^*}.$
  
\section{The commutant of a pure contractive semigroup}\label{Sec-Comm-pure-cont-sgp}
In the previous section, we have seen a model for the pure contractive semigroups. In this section, we obtain the commutant of a pure contractive semigroup (see \cref{comm-pure-semigp}). 

 For a bounded operator $T,$ let $\sigma_p(T)$ denote the set of all eigenvalues of $T.$ For a Hilbert space $\E$, we define the class
  \begin{equation*}
  	\mathcal{C}_\E:=\left\{\psi\in H^\infty(\D,\B(\E))\text{ with }\norm{\psi}_\infty\le 1\text{ and }1\notin\sigma_p(\psi(z)) \text{ for any }z\in \D \right\}.
  \end{equation*}
For $\psi\in\CE,$ the function $\varphi_t\circ\psi$ is defined as $\varphi_t\circ\psi(z)=\varphi_t(\psi(z))$ for all $z\in\D,$ where $\varphi_t(w)=e^{t\frac{w+1}{w-1}}$ for $t\ge0$ and $w\in \D.$ It is easy to see that $\varphi_t\circ\psi\in \Hinf$ with $\norm{\varphi_t\circ\psi}_\infty\le 1,$ for all $t\ge 0.$ 
 
\begin{lemma}\label{comp-semigp}
Let $\psi\in \CE$ and let $\Q$ be a closed subspace in  $H^2(\D,\E).$ Then $P_\Q M_\psi|_\Q$ is the cogenerator of a contractive semigroup. Further, if $\Q$ is invariant under $M_\psi^*,$  then $(P_\Q M_{\varphi_t\circ \psi}|_\Q)_{t\ge 0}$ is the contractive semigroup whose cogenerator is $P_\Q M_\psi|_\Q.$
\end{lemma}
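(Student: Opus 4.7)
The plan is to establish two claims about $A:=P_\Q M_\psi|_\Q$: (i) $A$ is a contraction with $1\notin\sigma_p(A)$, hence is the cogenerator of a (unique) contractive semigroup; (ii) if $\Q$ is in addition $M_\psi^*$-invariant, then this semigroup equals $(P_\Q M_{\varphi_t\circ\psi}|_\Q)_{t\ge 0}$.

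For (i), contractivity is immediate from $\norm{M_\psi}=\norm{\psi}_\infty\le 1$. For $1\notin\sigma_p(A)$, suppose $Af=f$ for some non-zero $f\in\Q$. The chain
\[\norm{f}=\norm{P_\Q M_\psi f}\le\norm{M_\psi f}\le\norm{f}\]
forces equality throughout, giving $M_\psi f\in\Q$ and $P_\Q M_\psi f=M_\psi f$; combined with $P_\Q M_\psi f=f$, this yields $M_\psi f=f$ in $H^2(\D,\E)$, i.e., $\psi(z)f(z)=f(z)$ for all $z\in\D$. Choosing $z_0\in\D$ with $f(z_0)\ne 0$ (possible since $f$ is a non-zero analytic function) gives $1\in\sigma_p(\psi(z_0))$, contradicting $\psi\in\CE$. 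Thus $A$ is a cogenerator.

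For (ii), the hypothesis that $\Q$ is $M_\psi^*$-invariant is equivalent to $\Q^\perp$ being $M_\psi$-invariant, so with respect to $\H=\Q\oplus\Q^\perp$,
\[M_\psi=\begin{pmatrix} A & 0 \\ B & C \end{pmatrix}\]
for suitable $B,C$. Iterating yields $P_\Q M_\psi^n|_\Q=A^n$ for every $n\ge 0$; substituting into the norm-convergent Taylor expansion of $\varphi_t$ on $r\Dc$ (valid since $\norm{rM_\psi}<1$) gives $P_\Q\varphi_t(rM_\psi)|_\Q=\varphi_t(rA)$ for each $r\in(0,1)$. Letting $r\to 1^-$ and using the defining strong limit of the $H^\infty$-calculus on both sides (which applies since $1\notin\sigma_p(M_\psi)$ and $1\notin\sigma_p(A)$, by the pointwise argument of (i)), we obtain $P_\Q\varphi_t(M_\psi)|_\Q=\varphi_t(A)$.

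The final step is to identify $\varphi_t(M_\psi)=M_{\varphi_t\circ\psi}$. For $r<1$, the power-series calculus gives $\varphi_t(rM_\psi)=M_{\varphi_t(r\psi)}$, and the strong limit as $r\to 1^-$ produces $M_{\varphi_t\circ\psi}$ via the uniform bound $\norm{\varphi_t(r\psi(\zeta))}\le 1$ together with the pointwise convergence on $\D$. Combining, $\varphi_t(A)=P_\Q M_{\varphi_t\circ\psi}|_\Q$, which is precisely the claimed identification of the semigroup cogenerated by $A$. The main technical obstacle is this last identification $\varphi_t(M_\psi)=M_{\varphi_t\circ\psi}$, which requires interchanging the operator-level $H^\infty$-calculus on $M_\psi$ with the fiber-wise $H^\infty$-calculus on the contractions $\psi(z)$; the hypothesis $\psi\in\CE$ is exactly what guarantees that both calculi are well-defined throughout $\D$.
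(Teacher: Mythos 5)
Your proposal is correct, and it takes a genuinely more self-contained route than the paper. For the first assertion you spell out what the paper leaves implicit: that $\psi\in\CE$ forces $1\notin\sigma_p(M_\psi)$ (evaluate an eigenvector at a point where it does not vanish), and that for a contraction an eigenvalue $1$ of a compression lifts to the whole operator via the norm-equality chain; this is exactly the paper's first step, written out in detail. For the second assertion the paper simply cites \cite{Fact} for the identity $\varphi_t(M_\psi)=M_{\varphi_t\circ\psi}$ (in the adjoint form that $(M_{\varphi_t\circ\psi}^*)_{t\ge0}$ has cogenerator $M_\psi^*$) and then restricts the adjoint semigroup to $\Q$ via \cref{inv/red}, whereas you prove both ingredients directly: the lower-triangular block form of $M_\psi$ with respect to $\Q\oplus\Q^\perp$ gives $P_\Q\varphi_t(rM_\psi)|_\Q=\varphi_t(rA)$ for $r<1$, and you then pass to strong limits. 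Your route buys independence from the reference at the cost of re-proving part of its content; the paper's route is shorter but outsources the key identity.

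One justification needs tightening. You claim $\slim_{r\to1^-}M_{\varphi_t\circ (r\psi)}=M_{\varphi_t\circ\psi}$ ``via the uniform bound together with pointwise convergence on $\D$.'' A uniform bound plus pointwise convergence on $\D$ does not by itself yield convergence in $H^2$: for instance $z^{n}\to0$ pointwise and boundedly on $\D$ while $\norm{z^{n}}_{H^2}=1$. The correct argument is available from what you have already set up: the strong limit $\varphi_t(M_\psi)=\slim_{r\to1^-}\varphi_t(rM_\psi)$ exists because $M_\psi$ is a contraction with $1\notin\sigma_p(M_\psi)$, and since point evaluations are bounded on $H^2(\D,\E)$, strong convergence gives, for every $f\in H^2(\D,\E)$ and $z\in\D$,
\[(\varphi_t(M_\psi)f)(z)=\lim_{r\to1^-}\varphi_t(r\psi(z))f(z)=\varphi_t(\psi(z))f(z),\]
the last equality being the fiberwise cogenerator calculus for the contraction $\psi(z)$, which has $1\notin\sigma_p(\psi(z))$ because $\psi\in\CE$. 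Hence $\varphi_t(M_\psi)f=M_{\varphi_t\circ\psi}f$, which is exactly the identification you need (alternatively, one may quote \cite{Fact} here, as the paper does). With this repair the rest of your argument goes through as written.
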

\begin{proof}	Since $\psi\in \CE,$ we have $1\notin\sigma_p(M_\psi)$ and hence $1 \notin\sigma_p(P_\Q M_\psi|_\Q).$ Therefore, $P_\Q M_\psi|_\Q$ is the cogenerator of a contractive semigroup.
	
 By \cite{Fact}, $(M_{\varphi_t\circ \psi}^*)_{t\ge 0}$ is the contractive semigroup, whose cogenerator is $M_\psi^*.$ Suppose $\Q$ is invariant under $M_\psi^*$, then it is invariant under $(M_{\varphi_t\circ \psi}^*)_{t\ge 0}.$ The cogenerator of the contractive semigroup $(M_{\varphi_t\circ \psi}^*|_\Q)_{t\ge 0}$ is $M_\psi^*|_\Q.$ Hence,  $(P_\Q M_{\varphi_t\circ \psi}|_\Q)_{t\ge 0}$ is the contractive semigroup with cogenerator $P_\Q M_\psi|_\Q.$
 \end{proof} 
Let $\Q$ be an invariant subspace of $\MzE^*$. Consider the pure contractive semigroup $(P_\Q M_{\varphi_t\circ \z^\E}|_\Q)_{t\ge 0}.$ If $\psi\in\CE,$ with the property that $\Q$ is invariant under $M_\psi^*$, then the cogenerator $P_\Q M_\psi|_\Q$ commutes  with the cogenerator $P_\Q \MzE|_\Q$. Hence the semigroup $(P_\Q M_{\varphi_t\circ \psi}|_\Q)_{t\ge 0}$ commutes with $(P_\Q M_{\varphi_t\circ \z^\E}|_\Q)_{t\ge 0}.$ We show that these are the only contractive semigroups which commute with $(P_\Q M_{\varphi_t\circ \z^\E}|_\Q)_{t\ge 0}.$ To show that we use the following lemma.
\begin{lemma}\label{main:lemma}
	Let $\eta\in H^\infty(\D,\B(\E))$ with $\norm{\eta}_\infty\le 1.$ Let $\Q\subseteq H^2(\D,\E)$ be an invariant subspace of $M_\eta^*.$ Suppose $1\notin\sigma_p(P_\Q M_\eta|_\Q).$ Then, there exists a $\psi \in H^\infty(\D,\B(\E))$  such that  $\psi\in\CE$, $\Q$ is invariant under $M_\psi^*$ and $P_\Q M_\psi|_\Q=P_\Q M_\eta|_\Q.$
\end{lemma}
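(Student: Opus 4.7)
My plan is to identify the subspace of $\E$ on which $\eta(z)$ is forced to be the identity, use the hypothesis $1 \notin \sigma_p(P_\Q M_\eta|_\Q)$ to show that this subspace is invisible to $\Q$, and then simply zero out the identity part. Set
\[ \E_1 := \set{v \in \E : \eta(z) v = v \text{ for all } z \in \D}, \qquad \E_2 := \E_1^\perp. \]
Applying the scalar maximum modulus principle to the holomorphic function $z \mapsto \la \eta(z) v, v\ra / \norm{v}^2$ (which maps $\D$ into $\Dc$ and attains $1$ at any $z_0$ with $\eta(z_0) v = v$) shows that the pointwise $1$-eigenspace $\ker(\eta(z) - I)$ is independent of $z \in \D$ and equals $\E_1$. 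Since each $\eta(z)$ is a contraction and the standard fact that $Av = v$ implies $A^* v = v$ for a contraction $A$ applies, one has $\eta(z)^* \E_1 \subseteq \E_1$, hence $\eta(z) \E_2 \subseteq \E_2$. Therefore $\eta$ is block diagonal,
\[ \eta(z) = I_{\E_1} \oplus \eta'(z), \]
with $\eta' \in H^\infty(\D, \B(\E_2))$ satisfying $\norm{\eta'}_\infty \le 1$ and $1 \notin \sigma_p(\eta'(z))$ for every $z \in \D$. Set $K_i := H^2(\D, \E_i)$, so that $\HDE = K_1 \oplus K_2$ and $M_\eta = I_{K_1} \oplus M_{\eta'}$.

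The heart of the argument is to show $\Q \subseteq K_2$. First, because $M_\eta$ acts as the identity on $K_1$, any nonzero element of $\Q \cap K_1$ would be a $1$-eigenvector of $P_\Q M_\eta|_\Q$; the hypothesis therefore forces $\Q \cap K_1 = \set{0}$. To upgrade this to the full inclusion $\Q \subseteq K_2$, I would apply the von Neumann mean ergodic theorem to the contraction $M_{\eta'}^*$ on $K_2$. Its fixed-point subspace is trivial: $M_{\eta'}^* f = f$ implies $M_{\eta'} f = f$, hence $\eta'(z) f(z) = f(z)$ for every $z \in \D$, and combined with $1 \notin \sigma_p(\eta'(z))$ this forces $f \equiv 0$. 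Consequently $\frac{1}{N} \sum_{n=0}^{N-1} (M_{\eta'}^*)^n \to 0$ strongly on $K_2$, and for any $q = q^{(1)} + q^{(2)} \in \Q$,
\[ \frac{1}{N}\sum_{n=0}^{N-1} (M_\eta^*)^n q \;=\; q^{(1)} + \frac{1}{N}\sum_{n=0}^{N-1} (M_{\eta'}^*)^n q^{(2)} \;\longrightarrow\; q^{(1)}. \]
Every term on the left lies in the closed subspace $\Q$, so $q^{(1)} \in \Q$; but $q^{(1)} \in \Q \cap K_1 = \set{0}$, so $q = q^{(2)} \in K_2$.

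With $\Q \subseteq K_2$ in hand, the construction of $\psi$ is direct: set $\psi(z) := P_{\E_2} \eta(z) P_{\E_2}$, which in block form is $0_{\E_1} \oplus \eta'(z)$. Clearly $\psi \in \Hinf$ with $\norm{\psi}_\infty \le 1$, and the condition $1 \notin \sigma_p(\psi(z))$ is checked by decomposing a candidate eigenvector $v = v_1 + v_2$ in blocks: $\psi(z)v = \eta'(z) v_2 = v_1 + v_2$ forces $v_1 = 0$ and $\eta'(z) v_2 = v_2$, whence $v_2 = 0$. So $\psi \in \CE$. Since $M_\eta$ and $M_\psi$ act identically on $K_2$ while $\Q \subseteq K_2$, we obtain $M_\psi^* q = M_\eta^* q \in \Q$ and $P_\Q M_\psi|_\Q = P_\Q M_\eta|_\Q$ for every $q \in \Q$, as required.

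The main obstacle is the mean ergodic step: it is precisely there that the pointwise spectral condition built into the definition of $\CE$ is used essentially, first to rule out fixed points of $M_{\eta'}^*$ and then to promote the pointwise constraint $\Q \cap K_1 = \set{0}$ into the structural inclusion $\Q \subseteq K_2$ on which the construction of $\psi$ rests.
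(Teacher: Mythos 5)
Your proof is correct, and while its skeleton matches the paper's (split $\E=\E_1\oplus\E_2$ along the $z$-independent $1$-eigenspace of $\eta(z)$, prove $\Q\subseteq H^2(\D,\E_2)$, then replace the identity block), the two key steps are carried out by genuinely different means. For the block decomposition the paper simply cites Brown--Douglas, whereas you rederive it via the scalar maximum modulus principle applied to $z\mapsto\la\eta(z)v,v\ra/\norm{v}^2$ together with the fact that a contraction fixes a vector iff its adjoint does; this makes the lemma self-contained at essentially no extra cost. For the central inclusion $\Q\subseteq H^2(\D,\E_2)$ the paper argues elementarily: it takes $f\in H^2(\D,\E_1)$ (exactly the fixed space of $M_\eta^*$), splits $f=f_1+f_2$ along $\Q\oplus\Q^\perp$, and uses a norm inequality to conclude that $f_2$, hence $f_1$, is fixed by $M_\eta^*$, so $f_1=0$ by the hypothesis $1\notin\sigma_p(P_\Q M_\eta|_\Q)$, giving $H^2(\D,\E_1)\subseteq\Q^\perp$. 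You instead apply the von Neumann mean ergodic theorem to $M_{\eta'}^*$ (whose fixed space is trivial by the pointwise condition on $\eta'$) to show that the Ces\`aro means of $(M_\eta^*)^n$ applied to $q\in\Q$ converge to its $H^2(\D,\E_1)$-component, which must then lie in $\Q\cap H^2(\D,\E_1)=\{0\}$. The paper's route uses only norm estimates and standard contraction facts; yours imports a classical theorem but isolates cleanly why the hypothesis forces $\Q$ to be orthogonal to the identity block. Finally, your choice $\psi=0_{\E_1}\oplus\eta'$ is just the special case $\kappa=0$ of the paper's family $\kappa\oplus\theta$ with $\kappa\in\mathcal{C}_{\E_1}$ arbitrary, so you produce one valid $\psi$ where the paper stresses the non-uniqueness; everything else in your construction and verification agrees with the paper.
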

\begin{proof}
	By \cite[Theorem 4.(1)]{Brown-Douglas} and the fact that $\norm{\eta}_\infty\le 1,$ the space $\E$ decomposes into the direct sum $\E=\E_1\oplus\E_2$ and $\eta$ has the form 
	\begin{equation*}
		\eta(z)=\begin{pNiceMatrix}[first-row,last-col,nullify-dots]
			\E_1 & \E_2 & \\
			I &0 & \E_1\\ 0& \theta(z) & \E_2
		\end{pNiceMatrix} \quad \text{ for all }z\in \D,
	\end{equation*}
for some $\theta\in \mathcal{C}_{\E_2},$	where $\E_1$ is the eigenspace  for $\eta$ corresponding to the eigenvalue $1.$ Hence
	\begin{equation*}
	M_\eta=\begin{pNiceMatrix}[first-row,last-col,nullify-dots]
		H^2(\D,\E_1) & H^2(\D,\E_2) & \\
		I &0 & H^2(\D,\E_1)\\ 0& M_\theta & H^2(\D,\E_2).
	\end{pNiceMatrix}
\end{equation*}
Since $\theta\in\mathcal{C}_{\E_2},$ $1\notin\sigma_p(M_\theta).$ Therefore, $H^2(\D,\E_1)$ is the eigenspace for $M_\eta$ corresponding to the eigenvalue $1.$ Now since $M_\eta$ is a contraction, the space $H^2(\D,\E_1)$ is also the eigenspace for $M_\eta^*$ corresponding to the eigenvalue $1$; see \cite[Chap. I, Proposition 3.1]{Nagy-Foias}. 

Now suppose $\Q\subseteq H^2(\D,\E)$ is an invariant subspace for $M_\eta^*$ and  $1\notin\sigma_p(P_\Q M_\eta|_\Q).$ We shall show that $\Q\subseteq H^2(\D,\E_2).$ 

Let $f\in H^2(\D,\E_1).$ Then $M_\eta^*(f)=f.$
Let $f=f_1+f_2$ where $f_1\in \Q$ and $f_2\in \Q^\perp.$ Since $\Q$ is an invariant subspace of $M_\eta^*,$ we have  \begin{equation}\label{PQMpsi}
	P_{\Q^\perp}M_\eta^*(f_2)=f_2.
\end{equation} Hence
\begin{equation*}
	\norm{f_2}=\norm{P_{\Q^\perp}M_\eta^*(f_2)}\le \norm{M_\eta^*(f_2)}\le \norm{f_2}.
\end{equation*}
This implies that $M_\eta^*(f_2)\in \Q^\perp$ and hence $M_\eta^*(f_2)=f_2$ by \cref{PQMpsi}. Therefore, $f_2\in H^2(\D,\E_1)$ and hence $f_1\in H^2(\D,\E_1).$ Thus $ M_\eta^*(f_1)=f_1.$ As $1\notin\sigma_p(P_\Q M_\eta|_\Q),$ $1\notin\sigma_p(M_\eta^*|_\Q).$ Hence we must have $f_1=0.$ Thus $f=f_2\in \Q^\perp.$ This shows that $H^2(\D,\E_1)\subseteq \Q^\perp$,  equivalently $\Q\subseteq H^2(\D,\E_2).$

Now define $\psi:\D\to \B(\E)$ by
	\begin{equation*}
	\psi(z)=\begin{pNiceMatrix}[first-row,last-col,nullify-dots]
		\E_1 & \E_2 & \\
		\kappa(z) &0 & \E_1\\ 0& \theta(z) & \E_2
	\end{pNiceMatrix} \quad \text{ for }z\in \D,
\end{equation*}
where $\kappa$ is any function from the class $\mathcal{C}_{\E_1}$ (note that $\psi$ is not  unique if $\E_1\ne \{0\}$). Then it is easy to see that $\psi\in \CE.$ As $M_\psi^*|_{H^2(\D,\E_2)}=M_\theta^*=M_\eta^*|_{H^2(\D,\E_2)},\Q\subseteq H^2(\D,\E_2)$ and $\Q$ is invariant under $M_\eta^*,$ we note that $\Q$ is invariant under  $M_\psi^*$ and $M_\psi^*|_\Q=M_\eta^*|_\Q.$ This implies that $P_\Q M_\psi|_\Q=P_\Q M_\eta|_\Q.$
\end{proof}
In the above lemma we get uncountably many $\psi\in \CE$ if $\eta\notin \CE.$ We are now ready to the theorem.
\begin{thm}\label{comm-pure-semigp}
	Let $\Q$ be an invariant subspace of $\MzE^*$ in $H^2(\D,\E).$ Let $\Tc$ be a contractive semigroup on $\Q$ which commutes with $(P_\Q M_{\varphi_t\circ \z^\E}|_\Q)_{t\ge 0}.$ Then, there exists a $\psi\in \CE$ such that $\Q$ is invariant under $M_\psi^*$ and $T_t=P_\Q M_{\varphi_t\circ \psi}|_\Q$ for $t\ge 0.$	
\end{thm}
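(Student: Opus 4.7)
The plan is to reduce the statement about semigroups to a statement about their cogenerators, apply the commutant characterization \eqref{pure-cont-comm} to obtain a symbol, upgrade that symbol to one lying in $\CE$ via \cref{main:lemma}, and then lift back to semigroups using \cref{comp-semigp} together with the uniqueness of the semigroup associated to a cogenerator.

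First I would let $T$ denote the cogenerator of $\Tc$. By \cref{lem:comp-sgp}, the cogenerator of $(P_\Q M_{\varphi_t\circ \z^\E}|_\Q)_{t\ge 0}$ is exactly $P_\Q \MzE|_\Q$. Since two contractive semigroups commute if and only if their cogenerators commute (as noted earlier in the introduction), $T$ is a contraction on $\Q$ that commutes with $P_\Q\MzE|_\Q$; moreover, $1\notin\sigma_p(T)$ because $T$ is a cogenerator. Note that $P_\Q\MzE|_\Q$ is a pure contraction, so formula \eqref{pure-cont-comm} applies.

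Next I would invoke \eqref{pure-cont-comm} to obtain some $\eta \in \Hinf$ with $\|\eta\|_\infty \le 1$ and $M_\eta^*(\Q)\subseteq \Q$ such that $T = P_\Q M_\eta|_\Q$. A priori $\eta$ need not belong to $\CE$, which is exactly the obstruction that requires the technical work of \cref{main:lemma}. Because $1\notin\sigma_p(P_\Q M_\eta|_\Q)$, \cref{main:lemma} yields some $\psi\in\CE$ with $M_\psi^*(\Q)\subseteq \Q$ and
\[
P_\Q M_\psi|_\Q = P_\Q M_\eta|_\Q = T.
\]

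Finally, since $\psi\in\CE$ and $\Q$ is invariant under $M_\psi^*$, \cref{comp-semigp} says that $(P_\Q M_{\varphi_t\circ\psi}|_\Q)_{t\ge 0}$ is the contractive semigroup whose cogenerator is $P_\Q M_\psi|_\Q = T$. Because a contractive semigroup is uniquely determined by its cogenerator, this forces $T_t = P_\Q M_{\varphi_t\circ\psi}|_\Q$ for every $t\ge 0$, which is the desired conclusion. The main conceptual obstacle is bridging the gap between the symbol $\eta$ produced by Sarason's commutant description and the more restrictive class $\CE$ in which $\varphi_t\circ\psi$ makes sense as a symbol inducing the correct semigroup; this is precisely what \cref{main:lemma} accomplishes, and the rest of the argument is essentially bookkeeping about cogenerators.
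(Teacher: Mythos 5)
Your proposal is correct and follows essentially the same route as the paper's own proof: pass to the cogenerator $T$, apply \eqref{pure-cont-comm} to get a symbol $\eta$, upgrade it to $\psi\in\CE$ via \cref{main:lemma}, and conclude with \cref{comp-semigp} and the uniqueness of the semigroup determined by a cogenerator. No gaps to report.
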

\begin{proof}
	Let $T$ be the cogenerator of $\Tc.$  Then $T$ commutes with $P_\Q \MzE|_\Q$. Therefore, by \cref{pure-cont-comm}, there exists a $\eta\in H^\infty(\D,\B(\E))$ with $\norm{\eta}_\infty\le 1$ and $M_\eta^*(\Q)\subseteq \Q$  such that $T=P_\Q M_{\eta}|_\Q.$   Since $T$ is a cogenerator,  $1\notin\sigma_p(P_\Q M_\eta|_\Q).$ By \cref{main:lemma}, there exists $\psi\in \CE$ such that $\Q$ is invariant under $M_\psi^*$ and  $P_\Q M_\psi |_\Q=P_\Q M_\eta |_\Q.$ Thus, by \cref{comp-semigp}, $T_t=P_\Q M_{\varphi_t\circ \psi}|_\Q$ for all $t\ge 0.$	
\end{proof}
 From \cref{pure-cont-comm}, it follows that if   $(T_1,T_2)$ is a pair of commuting contractions with $T_1$ being pure, then $(T_1,T_2)$ is jointly unitarily equivalent to $(P_\Q \MzE|_\Q,P_\Q M_{\psi}|_\Q)$ for some Hilbert space $\E,$ $\psi\in H^\infty(\D,\B(\E))$ with $\norm{\psi}_\infty\le 1$ and a joint invariant subspace $\Q$ of $(\MzE^*,M_\psi^*).$ As a corollary to \cref{comm-pure-semigp}, we obtain a similar result (in \cref{cor:main}) for commuting semigroups of contractions with one of them being pure. We start with a family of prototypical examples.

Let $\E$ be a Hilbert space and  $\Q$ be a $\MzE^*$-invariant closed subspace in $H^2(\D,\E).$ Suppose $\psi_j\in \CE$ for $j=2,3,...,n$ are such that  $\Q$ is a joint invariant subspace for $(M_{\psi_2}^*,...,M_{\psi_n}^*)$ and $M_{\psi_j}^*|_\Q$'s are commuting. Then \begin{equation*}
	\Tc_1:=(P_\Q M_{\varphi_t\circ{\z^\E}}|_\Q)_{t\ge 0}, \Tc_2:= (P_\Q M_{\varphi_t\circ{\psi_2}}|_\Q)_{t\ge 0},...,\Tc_n:=(P_\Q M_{\varphi_t\circ{\psi_n}}|_\Q)_{t\ge 0}
\end{equation*} are commuting contractive semigroups on $\Q$ with $\Tc_1$ is pure. For an explicit example of such a tuple $(\Tc_1,...,\Tc_n),$ take $\E,\Q$ and $\psi_j$'s as follows: Let $\E=\E_0\oplus\E_1,$ where $\E_0$ and $\E_1$ are any two Hilbert spaces, and  $$\Q=\{a_0+a_1z:a_0,a_1\in \E\}\subseteq H^2(\D,\E).$$ Let $\{B_{2,0},B_{3,0},...,B_{n,0}\}$ be a family of commuting contractions on $\E_0$ such that $1$ is not an eigenvalue of $B_{j,0}$ for each $j.$ Let $\{B_{2,1},B_{3,1},...,B_{n,1}\}$ be a family of commuting contractions on $\E_1.$  Let $A_{j,0}$ and $A_{j,1}$ be the contractions on $\E$ given by $$A_{j,0}=B_{j,0}\oplus 0\text{ and } A_{j,1}=0\oplus B_{j,1}$$ for $j=2,...,n.$  Consider $\psi_j\in \CE$ defined by $\psi_j(z)=A_{j,0}+zA_{j,1}$ for $z\in \D$ and $j=2,3,...,n.$ (Note that $\Tc_j$ is pure if and only if $B_{j,0}$ is pure for any $j\in \{2,3,...,n\}.$)

Now we state the corollary.

\begin{corollary}\label{cor:main}
	Let $(\Tc_1,\Tc_2,...,\Tc_n)$ be a tuple of commuting contractive semigroups on $\H,$ with one of them being pure, say $\Tc_1$ is pure. Then, there exist
	\begin{enumerate}
		\item  a Hilbert space $\E,$
		\item $\psi_j\in \CE$ for $j=2,...,n,$
		\item  a joint invariant subspace $\Q$ for $(\MzE^*,M_{\psi_2}^*,...,M_{\psi_n}^*)$ with the property that $M_{\psi_j}^*|_\Q$'s commute, and 
		\item a unitary $\Lambda: \H\to \Q$
	\end{enumerate}
	such that $\Lambda \Tc_1\Lambda ^*=(P_\Q M_{\varphi_t\circ \z^\E}|_\Q)_{t\ge 0}$ and $\Lambda \Tc_j\Lambda^*=(P_\Q M_{\varphi_t\circ \psi_j}|_\Q)_{t\ge 0},$  for $j=2,3,...n.$  
\end{corollary}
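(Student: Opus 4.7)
The plan is to combine the model theorem for a single pure contractive semigroup (\cref{pure-semigp-model}) with the commutant description (\cref{comm-pure-semigp}) applied successively to $\Tc_2,\ldots,\Tc_n$. The hypothesis that $\Tc_1$ is pure is used at the first step to pick a concrete model space, and then each $\Tc_j$ is automatically a contractive semigroup on that model which commutes with the compressed shift semigroup, so \cref{comm-pure-semigp} supplies the required symbol $\psi_j$.

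More concretely, I would first apply \cref{pure-semigp-model} to $\Tc_1$ to obtain a Hilbert space $\E$, a $(\MzE)^*$-invariant subspace $\Q \subseteq H^2(\D,\E)$, and a unitary $\Lambda:\H\to\Q$ such that $\Lambda\Tc_1\Lambda^*=(P_\Q M_{\varphi_t\circ\z^\E}|_\Q)_{t\ge 0}$. Then, for each $j\in\{2,\ldots,n\}$, the semigroup $\Lambda\Tc_j\Lambda^*$ is a contractive semigroup on $\Q$ that commutes with $(P_\Q M_{\varphi_t\circ\z^\E}|_\Q)_{t\ge 0}$, since unitary conjugation and the commutation hypothesis on the tuple preserve commutation at each time $t$. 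Applying \cref{comm-pure-semigp} to $\Lambda\Tc_j\Lambda^*$ yields $\psi_j\in\CE$ with $M_{\psi_j}^*(\Q)\subseteq\Q$ and $\Lambda T_{j,t}\Lambda^*=P_\Q M_{\varphi_t\circ\psi_j}|_\Q$ for all $t\ge 0$.

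It remains to verify the joint-commutation assertion on the symbols, namely that $M_{\psi_j}^*|_\Q$ and $M_{\psi_k}^*|_\Q$ commute for $j,k\in\{2,\ldots,n\}$. Since $\Tc_j$ and $\Tc_k$ commute, so do their unitary conjugates $(P_\Q M_{\varphi_t\circ\psi_j}|_\Q)_{t\ge 0}$ and $(P_\Q M_{\varphi_t\circ\psi_k}|_\Q)_{t\ge 0}$. By \cref{comp-semigp}, the cogenerators of these two semigroups are $P_\Q M_{\psi_j}|_\Q$ and $P_\Q M_{\psi_k}|_\Q$ respectively; and the fact recalled in \cref{Sec-Intro} that two contractive semigroups commute if and only if their cogenerators commute then forces $P_\Q M_{\psi_j}|_\Q$ and $P_\Q M_{\psi_k}|_\Q$ to commute. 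Taking adjoints, this is exactly the commutation of $M_{\psi_j}^*|_\Q$ and $M_{\psi_k}^*|_\Q$, which is what was asserted.

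I do not anticipate a real obstacle in this proof: every ingredient is already in place from \cref{pure-semigp-model}, \cref{comp-semigp}, and \cref{comm-pure-semigp}. The only point that requires a second look is the cogenerator-level commutation in the last paragraph; the cleanest way to handle it is through the cogenerator correspondence for semigroups rather than trying to manipulate $\varphi_t\circ\psi_j$ directly, since the functions $\varphi_t\circ\psi_j$ are not polynomial in $\psi_j$ and commutation of $\psi_j,\psi_k$ at symbol level is neither needed nor available.
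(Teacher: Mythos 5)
Your proposal is correct and follows essentially the same route as the paper: model $\Tc_1$ via \cref{pure-semigp-model}, apply \cref{comm-pure-semigp} to each conjugated $\Tc_j$, and deduce the commutation of the $M_{\psi_j}^*|_\Q$'s by passing to the cogenerators $P_\Q M_{\psi_j}|_\Q$ (via \cref{comp-semigp}) and taking adjoints. No gaps.
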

\begin{proof} By \cref{pure-semigp-model}, there exist a Hilbert space $\E,$ an invariant subspace $\Q$ for $\MzE^*$  in $H^2(\D,\E)$ and a unitary $\Lambda:\H\to \Q$ such that 
	\begin{equation*}
		\Lambda T_{1,t} \Lambda^*= P_\Q M_{\varphi_t\circ \z^\E}|_\Q \text{ for all } t\ge 0.
	\end{equation*}
As $\Lambda \Tc_j \Lambda^*$ commute with $\Lambda \Tc_1 \Lambda^*$ for $j=2,3,...,n,$ by \cref{comm-pure-semigp}, there exist $\psi_2,...,\psi_n\in \CE$ such that $\Q$ is a joint invariant  subspace for $(M_{\psi_2}^*,...,M_{\psi_n}^*)$ and $$\Lambda T_{j,t}\Lambda^*=P_\Q M_{\varphi_t\circ \psi_j}|_\Q\text{ for } j=2,3,...,n\text{ and }t\ge 0.$$

Now as the semigroups $\Lambda \Tc_j \Lambda^*$'s commute, their cogenerators $P_\Q M_{\psi_j}|_\Q$'s commute. This implies that  $M_{\psi_j}^*|_\Q$'s commute for $j=2,3,...,n.$	This completes the proof.
\end{proof}

\section{Model for an $n$-tuple of commuting normal contractive semigroups}\label{Sec-model-normal-sgp}
In studying tuples of commuting semigroups, a primary  objective is to identify a class of tuples of commuting semigroups with a concrete model. In \cite{Fact}, the authors, with collaborators, provided a model for c.n.u. tuples of commuting semigroups of isometries. Here, we present a model for tuples of commuting normal contractive  semigroups, utilizing the spectral theorem. We start with a lemma; the proof of the lemma is straightforward.
\begin{lemma}\label{1EV}
	Let $(X,\mu)$ be a semifinite measure space and let $\psi\in L^\infty(X,\mu).$ Then, $M_\psi$ has $1$ as an eigenvalue if and only if $E=\{x:\psi(x)=1\}$ has positive (non-zero) measure.	
\end{lemma}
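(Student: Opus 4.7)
The plan is to handle the two implications separately, the forward one via a direct construction of an eigenvector from the set $E$, and the backward one by localizing an assumed eigenvector on the set where it is nonzero.

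First I would prove the ``if'' direction. Assume $E=\{x:\psi(x)=1\}$ has $\mu(E)>0$. The semifiniteness of $(X,\mu)$ is used here: it provides a measurable subset $F\subseteq E$ with $0<\mu(F)<\infty$, so that $\chi_F\in L^2(X,\mu)$ and $\chi_F\ne 0$. Since $\psi(x)=1$ for every $x\in F$, one has $M_\psi\chi_F=\psi\chi_F=\chi_F$, exhibiting $\chi_F$ as an eigenvector of $M_\psi$ with eigenvalue $1$.

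For the converse, suppose $f\in L^2(X,\mu)$ is nonzero with $M_\psi f=f$, i.e.\ $\psi(x)f(x)=f(x)$ for $\mu$-almost every $x$. Let $N=\{x:f(x)\ne 0\}$. Since $f\ne 0$ in $L^2$, $\mu(N)>0$. On $N$ the equation $(\psi(x)-1)f(x)=0$ forces $\psi(x)=1$ almost everywhere on $N$, so $N\subseteq E$ up to a null set. Hence $\mu(E)\ge \mu(N)>0$.

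I do not anticipate a genuine obstacle; the only subtlety is that without some finiteness hypothesis on $\mu$ one cannot immediately conclude $\chi_E\in L^2$ (if $\mu(E)=\infty$), which is exactly what the semifiniteness assumption circumvents by passing to a subset $F$ of finite positive measure.
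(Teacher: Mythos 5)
Your proof is correct, and it is exactly the standard argument the paper has in mind: the paper itself omits the proof, remarking only that it is straightforward. Both directions are sound, and your use of semifiniteness to extract a subset $F\subseteq E$ of finite positive measure (so that $\chi_F\in L^2(X,\mu)$) is precisely the point where the hypothesis is needed.
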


	

Let $(X,\mu)$ be a semifinite measure space. Let $\psi\in L^\infty(X,\mu)$ with $\norm{\psi}_\infty\le 1$ and  $\{x:\psi(x)=1\}$ has measure zero. Then, 
\begin{enumerate}
	\item For $t\ge 0,$ $\varphi_t\circ\psi$ is defined as $\varphi_t\circ\psi(x)=\varphi_t(\psi(x))$ for $x\in X,$ where $\varphi_t(w)=e^{t\frac{w+1}{w-1}}$ for $w\in \Dc\setminus\{1\}.$ Note that $\varphi_t\circ\psi\in L^\infty(X,\mu)$ with $\norm{\varphi_t\circ\psi}_\infty\le 1.$  
	\item By \cref{1EV}, $M_{\psi}$ does not have $1$ as an eigenvalue. Hence $M_{\psi}$ is  the cogenerator of a contractive semigroup, which as the semigroup of multiplication operators is as follows. 
\end{enumerate}

\begin{lemma}\label{cogen}
	Let   $(X,\mu)$ be a semifinite measure space. Let $\psi\in L^\infty(X,\mu)$ with $\norm{\psi}_\infty\le 1$ and  $\{x:\psi(x)=1\}$ has measure zero. Then,  $(M_{\varphi_t\circ \psi})_{t\geq 0}$ is the contractive semigroup on $L^2(X,\mu),$ whose cogenerator is $M_\psi.$
\end{lemma}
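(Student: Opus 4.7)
The plan is to exploit the one-to-one correspondence between contractive semigroups and their cogenerators. First I would note that $\norm{\psi}_\infty\le 1$ forces $M_\psi$ to be a contraction on $L^2(X,\mu)$, and by \cref{1EV} the hypothesis that $\{\psi=1\}$ has measure zero ensures $1\notin\sigma_p(M_\psi)$. Hence, as recalled in the introduction, $M_\psi$ is the cogenerator of a unique contractive semigroup $(T_t)_{t\ge 0}$, and this semigroup is given by
\begin{equation*}
T_t \;=\; \slim_{r\to 1^-}\varphi_t(rM_\psi), \qquad t\ge 0.
\end{equation*}
The strategy is therefore to identify this strong limit with $M_{\varphi_t\circ\psi}$; this single identification simultaneously shows that $(M_{\varphi_t\circ\psi})_{t\ge 0}$ is a contractive semigroup and that its cogenerator is $M_\psi$.

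For the next step I would observe that for each $r\in[0,1)$ and $t\ge 0$, the function $\varphi_t$ is holomorphic in a neighbourhood of $r\Dc$, so the bounded Borel functional calculus for the multiplication operator $M_\psi$ (or, more elementarily, the fact that $rM_\psi = M_{r\psi}$ and that composition with $\varphi_t$ commutes with the multiplication representation) yields
\begin{equation*}
\varphi_t(rM_\psi)\;=\;M_{\varphi_t(r\psi)}
\end{equation*}
as a genuine equality of bounded operators on $L^2(X,\mu)$. The task thus reduces to verifying that $M_{\varphi_t(r\psi)}\to M_{\varphi_t\circ\psi}$ strongly as $r\to 1^-$.

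For this, fix $f\in L^2(X,\mu)$ and $t\ge 0$. Since $\varphi_t$ is continuous on $\Dc\setminus\{1\}$ and the set $\{\psi=1\}$ is $\mu$-null, one has $\varphi_t(r\psi(x))\to\varphi_t(\psi(x))$ for $\mu$-almost every $x$. Because $|\varphi_t(w)|\le 1$ for every $w\in\Dc\setminus\{1\}$, the integrand in
\begin{equation*}
\norm{M_{\varphi_t(r\psi)}f - M_{\varphi_t\circ\psi}f}_2^2 \;=\; \int_X \bigl|\varphi_t(r\psi(x))-\varphi_t(\psi(x))\bigr|^2\,|f(x)|^2\,d\mu(x)
\end{equation*}
is dominated by $4|f(x)|^2\in L^1(X,\mu)$, and the dominated convergence theorem delivers the required strong limit. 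Combining with the previous paragraph, $T_t = M_{\varphi_t\circ\psi}$ for every $t\ge 0$, which is the conclusion of the lemma.

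The only subtle point, and the main source of book-keeping, is to make sure $\varphi_t\circ\psi$ is genuinely defined $\mu$-a.e.\ (this is exactly where the hypothesis on $\{\psi=1\}$ is used) and that $|\varphi_t|\le 1$ is being invoked on the closed disc minus the singular point rather than merely on $\D$; these are precisely the features that the assumption on $\psi$ is designed to supply, after which dominated convergence finishes the argument with no further analytic input.
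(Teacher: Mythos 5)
Your proposal is correct and follows essentially the same route as the paper: both identify $\varphi_t(rM_\psi)$ with the multiplication operator $M_{\varphi_t(r\psi)}$ (the paper does this by expanding $e^{t(rM_\psi+I)(rM_\psi-I)^{-1}}$ through the multiplication functional calculus) and then pass to the strong limit $r\to 1^-$ via pointwise a.e.\ convergence off the null set $\{\psi=1\}$. Your explicit dominated-convergence bound $4|f|^2$ using $|\varphi_t|\le 1$ on $\Dc\setminus\{1\}$ just fills in a detail the paper leaves implicit.
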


\begin{proof}
	The semigroup corresponding to the cogenerator $M_\psi$ is given by:
	\begin{align*}
		\varphi_t(M_\psi)&=\slim_{r\to 1^-}e^{t(rM_\psi+I)(rM_\psi-I)^{-1}}\\
		&=\slim_{r\to 1^-}e^{t(M_{r\psi+1})(M_{r\psi-1})^{-1}}\\
		&=\slim_{r\to 1^-}e^{tM_{(r\psi+1)}M_{(r\psi-1)^{-1}}}\\
		&=\slim_{r\to 1^-}e^{M_{t(r\psi+1)(r\psi-1)^{-1}}}\\
		&=\slim_{r\to 1^-}M_{e^{t(r\psi+1)(r\psi-1)^{-1}}}\\
		&=M_{e^{t(\psi+1)(\psi-1)^{-1}}}.	
	\end{align*}
	For the last equality above, note that the strong limit $\slim_{r\to 1^-}M_{e^{t(r\psi+1)(r\psi-1)^{-1}}}$ exists and for any $f\in L^2(X,\mu),$ we have  $$\lim_{r\to 1^-}e^{t(r\psi+1)(r\psi-1)^{-1}}(x)f(x)= e^{t(\psi+1)(\psi-1)^{-1}}(x)f(x)$$ 
	 for almost every $x\in X.$
\end{proof}

Let $(X,\mu)$ be a semifinite measure space. For $j=1,2,...,n,$ let $\psi_j\in L^\infty(X,\mu)$ with $\norm{\psi_j}_\infty\le 1$ and  $\{x:\psi_j(x)=1\}$ has measure zero.  Hence by \cref{cogen}, $(M_{\varphi_t\circ \psi_j})_{t\geq 0}$ is the contractive semigroup whose cogenerator is $M_{\psi_j}.$ Since  $M_{\psi_j}$'s are commuting normal operators,
$((M_{\varphi_t\circ \psi_1})_{t\geq 0},...,(M_{\varphi_t\circ \psi_n})_{t\geq 0})$ is an $n$-tuple of commuting normal contractive semigroups; such a tuple is called a {\em model $n$-normal contractive semigroup}.

In the preceding paragraph, if the functions $\psi_j$ also satisfy the condition $|\psi_j|=1$ ($\psi_j$ is real valued) almost everywhere, then the operators $M_{\psi_j}$ become commuting unitaries (commuting self-adjoint operators). Consequently, $((M_{\varphi_t\circ \psi_1})_{t\geq 0},...,(M_{\varphi_t\circ \psi_n})_{t\geq 0})$ form an $n$-tuple of commuting unitary semigroups (commuting self-adjoint contractive semigroups). Such a tuple is referred to as a {\em model $n$-unitary semigroup $($model $n$-self-adjoint contractive semigroup$)$}.

\begin{thm}
	Any $n$-tuple of commuting normal contractive semigroups is unitarily equivalent to a model $n$-normal contractive semigroup.
\end{thm}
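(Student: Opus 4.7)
The plan is to pass from the tuple of semigroups to the tuple of their cogenerators, apply the multiplicative form of the spectral theorem for an $n$-tuple of commuting normal operators, and then translate the resulting multiplication operators back into semigroup language using \cref{cogen}.

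First, for each $j=1,\dots,n,$ let $T_j$ be the cogenerator of $\Tc_j.$ Since $\Tc_j$ is a normal contractive semigroup, $T_j$ is a normal contraction, and by the characterization of cogenerators, $1$ is not an eigenvalue of $T_j.$ Because the semigroups are commuting, so are the cogenerators $T_1,\dots,T_n.$ Thus $(T_1,\dots,T_n)$ is an $n$-tuple of commuting normal contractions on $\H.$

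Next, I would invoke the spectral theorem for a commuting family of normal operators in multiplicative form: there exist a semifinite (indeed $\sigma$-finite, after standard reductions) measure space $(X,\mu),$ a unitary $\mathcal U:\H\to L^2(X,\mu),$ and functions $\psi_1,\dots,\psi_n\in L^\infty(X,\mu)$ such that $\mathcal U T_j \mathcal U^* = M_{\psi_j}$ for every $j.$ Since $T_j$ is a contraction, $\norm{\psi_j}_\infty\le 1,$ and since $1\notin\sigma_p(T_j)=\sigma_p(M_{\psi_j}),$ \cref{1EV} forces $\mu(\{x:\psi_j(x)=1\})=0.$ Hence each $\psi_j$ satisfies the hypotheses required to build a model $n$-normal contractive semigroup.

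Finally, I would conclude by comparing semigroups through their cogenerators. By \cref{cogen}, the contractive semigroup on $L^2(X,\mu)$ with cogenerator $M_{\psi_j}$ is $(M_{\varphi_t\circ\psi_j})_{t\ge 0}.$ Since a contractive semigroup is uniquely determined by its cogenerator and $\mathcal U T_j\mathcal U^* = M_{\psi_j},$ conjugation by $\mathcal U$ intertwines $\Tc_j$ with $(M_{\varphi_t\circ\psi_j})_{t\ge 0}$ for every $j$ simultaneously, which is exactly the claimed unitary equivalence with a model $n$-normal contractive semigroup. The only step that is not entirely routine is ensuring a single unitary $\mathcal U$ works for all $j$ simultaneously; this is standard for commuting normal operators (joint spectral measure), so I do not expect a genuine obstacle here.
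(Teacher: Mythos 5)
Your proposal is correct and follows essentially the same route as the paper: pass to the cogenerators, apply the spectral theorem for the commuting normal contractions, use \cref{1EV} to see that the sets $\{x:\psi_j(x)=1\}$ are null, and then transfer back to the semigroups via \cref{cogen}. The only cosmetic difference is that the paper cites an external lemma for the fact that a unitary intertwining the cogenerators intertwines the semigroups, whereas you argue it directly from the uniqueness of the semigroup determined by its cogenerator, which amounts to the same thing.
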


\begin{proof}
	Let $(\Tc_1,\Tc_2,...,\Tc_n)$ be a tuple of commuting normal contractive semigroups on $\H,$ where  $\Tc_j=(T_{j,t})_{t\ge 0},$ with the cogenerator  $T_j$ for $j=1,2,...,n.$ Then $(T_1,T_2,...,T_n)$ is a tuple of commuting normal contractions on $\H.$ By spectral theorem, there exist (i) a semifinite measure space $(X,\mu),$ (ii) $\psi_j\in L^\infty(X,\mu)$ with $\norm{\psi_j}_\infty\le 1$ for $j=1,2,...,n$ and  (iii) a unitary $\gamma:\H\to L^2(X,\mu)$ such that $$T_j=\gamma^*M_{\psi_j}\gamma \text{ for }j=1,2,...,n.$$
	Note that each $M_{\psi_j}$ is a cogenerator as $T_j$ is a cogenerator. Hence, by \cref{1EV}, $\{x:\psi_j(x)=1\}$ has measure zero for each $j.$ 
	Thus, by \cref{cogen} and \cite[Lemma 2.4]{Fact}, we have  
	$$T_{j,t}=\gamma^*M_{\varphi_t\circ\psi_j}\gamma \text{ for } t\ge 0 \text{ and }j=1,2,...,n.$$
	This completes the proof.
\end{proof}
From the proof of the above theorem the following corollary is immediate.
\begin{corollary}
\noindent	\begin{enumerate}\label{unitary model}
		\item 	Any $n$-tuple of commuting unitary semigroups  is unitarily equivalent to a model $n$-unitary semigroup.
		\item 	Any $n$-tuple of commuting self-adjoint contractive semigroups is unitarily equivalent to a model $n$-self-adjoint contractive semigroup.		
	\end{enumerate}
\end{corollary}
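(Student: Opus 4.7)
The plan is to follow exactly the argument in the proof of the preceding theorem, but observe that the spectral theorem respects the finer structure of the cogenerators. Let $(\Tc_1,\ldots,\Tc_n)$ be a tuple of commuting unitary semigroups (respectively, commuting self-adjoint contractive semigroups) on $\H$, and let $T_1,\ldots,T_n$ be their cogenerators. By the discussion preceding \cref{pure-semigp-model} (item 4 of the list), each $T_j$ is a unitary (respectively, a self-adjoint contraction), and the $T_j$'s commute because the semigroups do. Thus $(T_1,\ldots,T_n)$ is a commuting tuple of unitaries (respectively, commuting self-adjoint contractions).

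The key point is that the spectral theorem for a commuting tuple of normal operators yields a model on some $L^2(X,\mu)$ with symbols $\psi_j\in L^\infty(X,\mu)$, and one can arrange that the essential range of each $\psi_j$ is contained in the spectrum of $T_j$. Hence in the unitary case one obtains $|\psi_j|=1$ a.e., while in the self-adjoint case $\psi_j$ is real-valued a.e.\ with $\|\psi_j\|_\infty\le 1$. Exactly as in the proof of the theorem, since each $T_j$ is a cogenerator, $1\notin\sigma_p(M_{\psi_j})$, and by \cref{1EV} the set $\{x:\psi_j(x)=1\}$ has measure zero.

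Finally, by \cref{cogen} combined with \cite[Lemma 2.4]{Fact}, the semigroup $\Tc_j$ is jointly unitarily equivalent (via the unitary $\gamma$ furnished by the spectral theorem) to $(M_{\varphi_t\circ \psi_j})_{t\ge 0}$ for each $j$. The tuple $((M_{\varphi_t\circ\psi_1})_{t\ge 0},\ldots,(M_{\varphi_t\circ\psi_n})_{t\ge 0})$ is then by definition a model $n$-unitary semigroup (respectively, a model $n$-self-adjoint contractive semigroup), proving both parts of the corollary.

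There is essentially no obstacle here, since the whole argument is a specialization of the preceding proof; the only point to check is that the joint spectral representation of commuting unitaries (resp.\ commuting self-adjoint contractions) can indeed be realized with unimodular (resp.\ real-valued) symbols, which is the standard content of the multiplication-operator form of the spectral theorem.
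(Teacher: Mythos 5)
Your proposal is correct and follows the same route as the paper, which simply notes that the corollary is immediate from the proof of the preceding theorem: pass to the cogenerators, apply the spectral theorem so that the symbols $\psi_j$ are unimodular (respectively real-valued) a.e., verify via \cref{1EV} that $\{x:\psi_j(x)=1\}$ is null, and transfer back to the semigroups by \cref{cogen} and \cite[Lemma 2.4]{Fact}. Nothing essential differs from the paper's argument.
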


\section{Tuples of doubly commuting pure contractive semigroups}\label{Sec-DC-Dil}
In this section, we obtain a model for the $n$-tuples of doubly commuting isometric semigroups (see \cite{DCDDC}, for a model of a certain  class in the case of $n=2$) and we demonstrate that any tuple of doubly commuting pure contractive semigroups can be dilated to a tuple of doubly commuting c.n.u. isometric semigroups.  

\subsection*{Models for $n$-tuples of doubly commuting isometric semigroups}
We begin with an example of a tuple of  doubly commuting c.n.u. isometric semigroups.
\begin{example}\label{DCexample}
	For $j=1,2,...,n$ and $t\ge 0,$ let $S_{j,t}:L^2(\R_+^n)\to L^2(\R_+^n)$ be defined by
	\begin{align*}
		(S_{j,t}f)(x_1,...,x_n)&=\begin{cases}
			f(x_1,...,x_{j-1},x_j-t,x_{j+1},...,x_n) & \text{if }x_j\ge t,\\
			0 & \text{else.} 
		\end{cases}		
	\end{align*}
	Define  $\S_j:=(S_{j,t})_{t\ge 0},$ then $ \S_j$ is an c.n.u. isometric semigroup for $j=1,2,...,n.$ In fact,  $(\S_1,\S_2,...,\S_n)$ is a tuple of doubly commuting c.n.u. isometric semigroups on $L^2(\R_+^n).$
\end{example}
Under the natural unitary  $\Gamma: L^2(\R_+^n)\rightarrow\underbrace{L^2(\R_+)\otimes\cdots \otimes L^2(\R_+)}_{n-\text{times}},$  we have
\begin{equation}\label{identification}
	\Gamma \S_j\Gamma^*=I_{L^2(\R_+)}\otimes\cdots \otimes I_{L^2(\R_+)}\otimes\underset{j^{\text{th}}\text{-place}}{\S}\otimes I_{L^2(\R_+)}\otimes\cdots \otimes I_{L^2(\R_+)}
\end{equation}
 for any $j\in \{1,2,...,n\},$ where $\S$ is the right shift semigroup on $L^2(\R_+).$  
\begin{definition}
	An $n$-tuple $(\V_1,\V_2,...,\V_n)$ of semigroups is called a {\em multi-shift-semigroup} if it is jointly unitarily equivalent to $(\S_1\otimes I_\E,\S_2\otimes I_\E,...,\S_n\otimes I_\E)$ on $L^2(\R_+^n)\otimes \E$ for some Hilbert space $\E.$
\end{definition}

Next, we give a concise introduction to the joint structure of a tuple of doubly commuting isometric semigroups. M. Słociński established the joint geometric structure for a pair of doubly commuting isometries in \cite{Słociński-1980}, later generalized to $n$-tuple cases by \cite{Gaspar-Suciu}, \cite{Timotin}, and \cite{Sarkar}. The analogous geometric structure for an $n$-tuple of doubly commuting isometric semigroups is also known; see for example \cite{Binzar-Lăzureanu},   where each semigroup is indexed by an unital sub-semigroup $H_i$ of an abelian group $G_i$ such that $H_i\cap H_i^{-1}=\{1_{H_i}\}$ and $G_i=H_iH_i^{-1}.$  When focused on the group $\R$ and the semigroup  $\mathbb{R}_+$, the result is as follows and it can be proved passing to the cogenerators and using  \cite[Theorem 3.1]{Sarkar}.
For any $n\ge 2,$ $I_n$ denotes the set $\{1,2,...,n\}.$ 
\begin{thm}\label{DCS-structure-thm}
	Let $(\V_1,\V_2,...,\V_n)$ be an $n$-tuple of doubly commuting isometric semigroups on $\H.$ Then there exist $2^n$ number of joint reducing subspaces $\{\H_A:A\subseteq I_n\}$ $($counting the trivial subspace $\{0\})$ for $(\V_1,\V_2,...\V_n)$ such that \[\H=\bigoplus_{A\subseteq I_n}\H_A\] and 
	for each $A\subseteq I_n$ and $\H_A\ne \{0\},\V_i|_{\H_A}$ is a c.n.u. isometric semigroup if $i\in A$ and  unitary semigroup if $i\in I_n\setminus A.$
\end{thm}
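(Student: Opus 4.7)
The plan is to reduce the statement to a known Wold-type decomposition for $n$-tuples of doubly commuting isometries by passing to cogenerators, as the authors themselves suggest. The one-to-one correspondence $\Tc \leftrightarrow T$ between contractive semigroups and their cogenerators preserves essentially every structural feature we need: being an isometry/unitary, being pure, being doubly commuting, and having a given closed subspace as a reducing subspace. These facts are all recorded in the introduction and in \cref{inv/red}.

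First, let $V_i$ denote the cogenerator of $\V_i$ for each $i\in I_n$. Since each $\V_i$ is an isometric semigroup, each $V_i$ is an isometry. Moreover, as the tuple $(\V_1,\ldots,\V_n)$ is doubly commuting, the equivalence noted in the introduction (an $n$-tuple of semigroups is (doubly) commuting iff their cogenerators are) gives that $(V_1,\ldots,V_n)$ is an $n$-tuple of doubly commuting isometries on $\H$.

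Second, invoke \cite[Theorem 3.1]{Sarkar}: for any $n$-tuple of doubly commuting isometries $(V_1,\ldots,V_n)$ on $\H$, there is an orthogonal decomposition
\begin{equation*}
	\H=\bigoplus_{A\subseteq I_n}\H_A
\end{equation*}
into $2^n$ joint reducing subspaces (allowing $\H_A=\{0\}$) such that for each non-trivial $\H_A$, the restriction $V_i|_{\H_A}$ is a pure (equivalently, c.n.u.) isometry for $i\in A$ and a unitary for $i\in I_n\setminus A$.

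Third, translate back to semigroups. By \cref{inv/red}, $\H_A$ is reducing for each $V_i$ if and only if it is reducing for each $\V_i$, so the family $\{\H_A:A\subseteq I_n\}$ is jointly reducing for $(\V_1,\ldots,\V_n)$. The same lemma asserts that $\V_i|_{\H_A}$ is the contractive semigroup with cogenerator $V_i|_{\H_A}$. For $i\in I_n\setminus A$, since $V_i|_{\H_A}$ is unitary, item (4) of the correspondence in the introduction gives that $\V_i|_{\H_A}$ is a unitary semigroup. For $i\in A$, $V_i|_{\H_A}$ is a pure isometry, hence by item (3) the semigroup $\V_i|_{\H_A}$ is a pure isometric semigroup; and by Cooper's theorem (recalled at the start of \cref{Sec-model-pure-cont-sgp}), a pure isometric semigroup is precisely a c.n.u.\ isometric semigroup. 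This gives the stated decomposition.

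There is no real obstacle in this argument: the proof is entirely a matter of transferring the conclusion of Sarkar's theorem across the cogenerator correspondence. The only point requiring any care is checking that each structural property (isometric, unitary, pure/c.n.u., doubly commuting, reducing) transfers in both directions, but this is exactly what the introductory material and \cref{inv/red} are designed to provide.
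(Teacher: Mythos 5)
Your proof is correct and is exactly the argument the paper intends: the authors state that the theorem "can be proved passing to the cogenerators and using \cite[Theorem 3.1]{Sarkar}," and your write-up simply fills in the transfer details via \cref{inv/red}, the cogenerator correspondence, and Cooper's identification of c.n.u.\ with pure isometric semigroups. No gaps.
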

Next, we obtain a model corresponding to each part $(\V_1|_{\H_A},...,\V_n|_{\H_A}),$ where $ A\subseteq I_n,$ of the preceding theorem. The forthcoming result provides a model for tuples of doubly commuting isometric semigroups,  where all of them are completely non-unitary (c.n.u.). The subsequent result addresses the remaining parts.
\begin{thm}\label{thm:DCcnumodel}
Let $n\ge 2.$	Let $\V_1,\V_2,...,\V_n$ be  semigroups of bounded operators on $\H$. Then, $(\V_1,\V_2,...,\V_n)$ is a tuple of doubly commuting isometric semigroups with all of them being c.n.u. if and only if $(\V_1,\V_2,...,\V_n)$ is a multi-shift-semigroup. 
\end{thm}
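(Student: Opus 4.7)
The backward direction is a direct inspection: for any Hilbert space $\E$ and each $j$, the operator $\S_j\otimes I_\E$ on $L^2(\R_+^n)\otimes\E$ is an isometric semigroup, complete non-unitarity of each coordinate shift $\S_j$ persists under tensoring with $I_\E$, and the double commutation is immediate from the tensor-slot description in \eqref{identification}.

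For the forward direction, I would pass to the cogenerators $V_1,\ldots,V_n$ of $\V_1,\ldots,\V_n$. By the semigroup-cogenerator dictionary collected in the introduction, $(V_1,\ldots,V_n)$ is then a tuple of doubly commuting isometries, each of which is a pure isometry (since for an isometric semigroup being c.n.u.\ is equivalent to being pure, and purity transfers between a semigroup and its cogenerator). Thus the problem reduces to the discrete statement: any $n$-tuple $(V_1,\ldots,V_n)$ of doubly commuting pure isometries on a Hilbert space is jointly unitarily equivalent to $(M_{z_1}\otimes I_\E,\ldots,M_{z_n}\otimes I_\E)$ on $H^2(\D^n)\otimes\E$, for some Hilbert space $\E$ and $M_{z_j}$ the multiplication by the $j$-th coordinate. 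This is the standard multivariable Beurling--Lax--Halmos model for doubly commuting pure isometries; see \cite[Theorem 3.1]{Sarkar} (and compare \cref{DCS-structure-thm}).

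The remaining step is to translate this $H^2(\D^n)\otimes\E$ model back into the shift picture on $L^2(\R_+^n)\otimes\E$. Iterating the Cooper-type unitary $W_\E$ from \eqref{shift} in each of the $n$ tensor slots (and absorbing $I_\E$) produces a unitary $\Gamma\colon L^2(\R_+^n)\otimes\E\to H^2(\D^n)\otimes\E$ which, via the identification \eqref{identification}, intertwines the cogenerator of $\S_j\otimes I_\E$ with $M_{z_j}\otimes I_\E$ for every $j$. Composing $\Gamma^*$ with the unitary provided by the multivariable Beurling--Lax--Halmos model yields a single unitary that jointly intertwines $(V_1,\ldots,V_n)$ with the cogenerator tuple of the multi-shift-semigroup $(\S_1\otimes I_\E,\ldots,\S_n\otimes I_\E)$. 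Lifting via \cref{inv/red} together with \cite[Lemma 2.4]{Fact} transports this joint unitary equivalence back to the semigroup level, showing that $(\V_1,\ldots,\V_n)$ is a multi-shift-semigroup.

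The main conceptual ingredient is the appeal to the multivariable Beurling--Lax--Halmos model for doubly commuting pure isometries; once that is in hand, the remaining work is routine bookkeeping translating between the $H^2(\D^n)\otimes\E$ multiplication picture and the $L^2(\R_+^n)\otimes\E$ shift picture via the iterated Cooper identification.
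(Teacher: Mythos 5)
Your proof is correct and follows essentially the same route as the paper: pass to the cogenerators, invoke Sarkar's model for an $n$-tuple of doubly commuting pure isometries (the paper cites \cite[Theorem 3.3]{Sarkar}, not Theorem 3.1, for the model itself), transfer to the $L^2(\R_+^n)\otimes \E$ picture via the tensor powers of $W_\C$, and lift the joint unitary equivalence back to the semigroup level using \cite[Lemma 2.4]{Fact}. Apart from that minor citation detail, no substantive difference from the paper's argument.
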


\begin{proof}
Let $V_i$ be the cogenerator of $\V_i$ for $i=1,2,...,n.$  Then, it follows that $(V_1,V_2,...,V_n)$ is a tuple of doubly commuting pure isometries on $\H.$ Therefore, by \cite[Theorem 3.3]{Sarkar}, there exist a Hilbert space $\E$ and a unitary  \[\Gamma:(\underbrace{H^2(\D)\otimes\cdots \otimes H^2(\D)}_{n-\text{times}})\otimes \E\to \H \] such that  for each $j=1,2,...,n,$
\[\Gamma^*V_j\Gamma=\left(I_{H^2(\D)}\otimes\cdots \otimes I_{H^2(\D)}\otimes \underset{j^{\text{th}}\text{-place}}{M_\z}\otimes I_{H^2(\D)}\otimes\cdots \otimes I_{H^2(\D)}\right)\otimes I_\E.\]  Let $W_\C:L^2(\R_+)\to H^2(\D)$ be the unitary defined in \cref{shift}.
Then,  $W_\C S W_\C^*=M_\z$, where $S$ is the cogenerator of the right shift semigroup $\S.$

Let $\Lambda: \left(L^2(\R_+)\otimes\cdots \otimes L^2(\R_+)\right)\otimes \E\to \H$ be the map $\Lambda :=\Gamma \left((W_\C\otimes\cdots\otimes W_\C)\otimes I_\E\right).$ Then for each $j=1,2,...,n,$
\[\Lambda^*V_j\Lambda=\left(I_{L^2(\R_+)}\otimes\cdots \otimes I_{L^2(\R_+)}\otimes \underset{j^{\text{th}}\text{-place}}{S}\otimes I_{L^2(\R_+)}\otimes\cdots\otimes I_{L^2(\R_+)}\right)\otimes I_\E.\]
Thus, by \cite[Lemma 2.4]{Fact},  for each $j=1,2,...,n,$ we have
\[\Lambda^*\V_j\Lambda=\left(I_{L^2(\R_+)}\otimes\cdots \otimes I_{L^2(\R_+)}\otimes \underset{j^{\text{th}}\text{-place}}{\S}\otimes I_{L^2(\R_+)}\otimes\cdots\otimes I_{L^2(\R_+)}\right)\otimes I_\E.\] Hence, the tuple  $(\Lambda^*\V_1\Lambda,...,\Lambda^*\V_n\Lambda)$ is jointly unitarily equivalent to $(\S_1\otimes I_\E,...,\S_n\otimes I_\E)$; see \cref{DCexample}. Therefore, $(\V_1,...,\V_n)$ is a multi-shift-semigroup.\medskip

The converse part is trivial.
\end{proof}
The preceding theorem provides a model for the case of $A=I_n$ in \cref{DCS-structure-thm}, and \cref{unitary model} offers a model for the case of $A=\emptyset,$  where all $\V_j$'s are unitary semigroups. Without loss of generality (by rearranging 
$\V_j$'s if necessary), the subsequent theorem furnishes a model for the cases of all other subsets $A$ of $I_n.$
\begin{thm}
	 Let $1\le m<n.$ Suppose $(\V_1,\V_2,...,\V_n)$ is a tuple of doubly commuting semigroups such that $\V_j$ is a c.n.u. isometric semigroup for $1\le j\le m$ and $\V_j$ is a unitary semigroup if $m+1\le j\le n.$ Then, there exist
	\begin{enumerate}
		\item a semifinite measure space $(X,\mu),$ and 
		\item  $\psi_j\in  L^\infty(X,\mu)$ with $|\psi_j|=1$ a.e., and  $\{x:\psi_j(x)=1\}$ has measure zero for $j=1,2,...,n-m,$
	\end{enumerate}
	such that 
the tuple $(\V_1,...,\V_n)$ on $\H$ is jointly unitarily equivalent to the tuple
	\begin{equation}\label{S-in-place}
	\left(\S_1\otimes I_{L^2(X,\mu)},...,\S_m\otimes I_{L^2(X,\mu)},I_{L^2(\R_+^m)}\otimes(M_{\varphi_t\circ\psi_1})_{t\ge0},...,I_{L^2(\R_+^m)}\otimes(M_{\varphi_t\circ\psi_{n-m}})_{t\ge 0}\right)
\end{equation} on $L^2(\R_+^m)\otimes  L^2(X,\mu).$
$($when $m=1$, we have  $\S$  in place of $\S_1$ in \cref{S-in-place}$).$
\end{thm}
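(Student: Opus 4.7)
The plan is to reduce to the cogenerator level, apply \cref{thm:DCcnumodel} to the first $m$ semigroups, and then use a double-commutant argument followed by the spectral theorem to handle the remaining unitary semigroups.

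First, I would pass to cogenerators. Let $V_j$ denote the cogenerator of $\V_j$ for $j=1,\ldots,n$. Since passing between a semigroup and its cogenerator preserves (double) commutativity and preserves the properties of being isometric/unitary/c.n.u., the tuple $(V_1,\ldots,V_n)$ is a doubly commuting tuple of contractions on $\H$ with $V_1,\ldots,V_m$ being pure (c.n.u.) isometries and $V_{m+1},\ldots,V_n$ being unitaries. In particular none of the $V_j$ has $1$ as eigenvalue. Then I would apply the argument of \cref{thm:DCcnumodel} (i.e., Sarkar's wandering subspace theorem for doubly commuting pure isometries) to the subtuple $(V_1,\ldots,V_m)$: there exists a Hilbert space $\E$ (the joint wandering subspace $\bigcap_{j=1}^m \ker V_j^*$) and a unitary $\Lambda:L^2(\R_+^m)\otimes \E\to \H$ such that
\[\Lambda^*V_j\Lambda = \bigl(I_{L^2(\R_+)}\otimes\cdots\otimes \underset{j^{\text{th}}}{S}\otimes\cdots\otimes I_{L^2(\R_+)}\bigr)\otimes I_\E,\quad j=1,\ldots,m,\]
where $S$ is the cogenerator of $\S$.

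Next, the key step is to show that for each $k\in\{m+1,\ldots,n\}$, under the identification given by $\Lambda$, the unitary $V_k$ has the form $I_{L^2(\R_+^m)}\otimes U_k$ for some unitary $U_k$ on $\E$. Since $V_k$ doubly commutes with each $V_j$ for $j\le m$, the operator $\Lambda^*V_k\Lambda$ commutes with both $(S_j)$-cogenerator $\otimes I_\E$ and its adjoint for each $j\le m$. A standard commutant calculation (on $H^2(\D)\otimes \E$ under the unitary $W_\E$ of \cref{shift}, the bicommutant of $\{M_\z\otimes I,M_\z^*\otimes I\}$ consists precisely of operators of the form $I\otimes A$ for $A\in \B(\E)$; iterating this in each of the $m$ coordinates gives the same conclusion for the $m$-fold tensor product) then forces $\Lambda^*V_k\Lambda = I\otimes U_k$. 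The operator $U_k$ is unitary because $V_k$ is; the tuple $(U_{m+1},\ldots,U_n)$ is a commuting tuple of unitaries on $\E$; and none of the $U_k$ has $1$ as an eigenvalue (else $I\otimes U_k=V_k$ would, contradicting that $V_k$ is a cogenerator).

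Now I invoke the spectral theorem for the commuting tuple of unitaries $(U_{m+1},\ldots,U_n)$: there exist a semifinite measure space $(X,\mu)$, functions $\psi_1,\ldots,\psi_{n-m}\in L^\infty(X,\mu)$ with $|\psi_j|=1$ a.e., and a unitary $\gamma:\E\to L^2(X,\mu)$ such that $\gamma U_{m+j}\gamma^*=M_{\psi_j}$ for each $j=1,\ldots,n-m$. Because $1\notin \sigma_p(U_{m+j})=\sigma_p(M_{\psi_j})$, \cref{1EV} yields $\mu\{x:\psi_j(x)=1\}=0$. Finally, using the unitary $\Lambda\circ (I_{L^2(\R_+^m)}\otimes \gamma^*)$ and the correspondence between semigroups and their cogenerators, the cogenerator-level identification lifts to the semigroup level by \cref{cogen} (for the $M_{\psi_j}$ factors) and \cite[Lemma 2.4]{Fact} together with \cref{inv/red} (for the shift factors). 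This produces the asserted joint unitary equivalence of $(\V_1,\ldots,\V_n)$ with the tuple displayed in \cref{S-in-place}.

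The main obstacle I anticipate is the commutant calculation establishing $\Lambda^*V_k\Lambda=I\otimes U_k$: one must carefully justify that an operator on $H^2(\D)^{\otimes m}\otimes\E$ doubly commuting with each coordinate shift lies in $I\otimes \B(\E)$. Everything else, including the translation between cogenerator identities and semigroup identities, is routine given the machinery already developed in \cref{Sec-model-pure-cont-sgp,Sec-model-normal-sgp}.
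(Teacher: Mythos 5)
Your proof is correct, but it takes a somewhat different route from the paper's at the crucial splitting step. The paper first models the c.n.u.\ part via \cref{thm:DCcnumodel} and the identification \cref{identification}, and then invokes the external result \cite[Theorem 3.6]{DCDDC} to conclude that each unitary semigroup acts as $I_{L^2(\R_+^m)}\otimes \mathfrak B_j$ for commuting unitary semigroups $\mathfrak B_j$ on the coefficient space, finishing with \cref{unitary model}; the case $m=1$ is treated separately using Cooper's theorem. You instead make this splitting self-contained at the cogenerator level: after applying Sarkar's model to $(V_1,\dots,V_m)$, you observe that each $\Lambda^*V_k\Lambda$ ($k>m$) lies in the commutant of $\{M_{\z_j}\otimes I_\E,\,M_{\z_j}^*\otimes I_\E\}_{j\le m}$, which equals $I\otimes\B(\E)$ since the tuple of coordinate shifts is jointly irreducible on $H^2(\D^m)$ (equivalently, by iterating the one-variable commutation-theorem argument coordinate by coordinate); the resulting $U_k$ are commuting unitaries without $1$ as an eigenvalue, and the spectral theorem plus \cref{1EV}, \cref{cogen} and \cite[Lemma 2.4]{Fact} finish exactly as in \cref{Sec-model-normal-sgp}. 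What your approach buys is independence from \cite{DCDDC} and a uniform treatment of $m=1$ and $m>1$ (for $m=1$ Sarkar's theorem degenerates to the Wold decomposition, i.e.\ Cooper at the cogenerator level); what the paper's approach buys is brevity by reusing an already published semigroup-level splitting. One terminological slip to fix: the set of operators commuting with both $M_\z\otimes I_\E$ and $M_\z^*\otimes I_\E$ is the \emph{commutant} of that pair (namely $I\otimes\B(\E)$), not its bicommutant, which is $\B(H^2(\D))\otimes\C I_\E$; your surrounding statement of what is actually needed is the correct one.
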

\begin{proof}\noindent \texttt{Case  $(m=1).$}  A proof for this case is established using  Cooper's theorem, \cite[Theorem 3.6]{DCDDC} and \cref{unitary model}.\medskip

\noindent	\texttt{Case $(1<m).$}
By  \cref{thm:DCcnumodel}, the identification in \cref{identification} and \cite[Theorem 3.6]{DCDDC} there exist a Hilbert space $\K$ and commuting unitary semigroups $\mathfrak B_j$ on $\K$ for $j=1,2,...,n-m$ such that the tuple $(\V_1,...,\V_n)$ is jointly unitarily equivalent to 
\begin{equation*}	\left(\S_1\otimes I_\K,...,\S_m\otimes I_\K,I_{L^2(\R_+^m)}\otimes \mathfrak{B}_1,...,I_{L^2(\R_+^m)}\otimes \mathfrak{B}_{n-m}\right).
\end{equation*}
Now the proof follows by applying \cref{unitary model} to the tuple $(\mathfrak B_1,...,\mathfrak B_{n-m}).$
\end{proof} 
If we substitute normal contractive semigroups $\V_j$ for $m+1 \le j \leq n$ in place of unitary semigroups in the above theorem, it remains valid with a minor modification in the conclusion. Specifically, the $\psi_j$'s satisfy $\norm{\psi_j}_\infty\le 1$ instead of $|\psi_j| = 1$ a.e..


\subsection*{Dilation of a tuple of doubly commuting pure contractive semigroups}
Suppose $(\Tc_1,\Tc_2,...,\Tc_n)$ and $(\V_1,\V_2,...,\V_n)$ are tuples of contractive semigroups on $\H$ and $\K,$ respectively. Then $(\V_1,\V_2,...,\V_n)$ is said to be a {\em dilation} of $(\Tc_1,\Tc_2,...,\Tc_n)$ if there exists an isometry  $\Omega:\H \to \K$ such that 
\[\Omega \Tc_j^*=\V_j^*\Omega \]
 for all $j=1,2,...,n,$ that is,
\[\Omega T_{j,t}^*=V_{j,t}^*\Omega \] for $j=1,2,...,n$ and $t\ge0.$

The dilation is said to be {\em minimal } if 
\[\K=\spncl\{{(V_{1,t_1}V_{2,t_2}\cdots V_{n,t_n})\Omega(\H):t_j\in \R_+ \text{ for }1\le j\le n}\}.\]

It is easy to check that the tuple $(\V_1,\V_2,...,\V_n)$ on $\K$ is a {\em dilation} of $(\Tc_1,\Tc_2...,\Tc_n)$ on $\H$ if and only if there exists a joint invariant subspace $\Q$ of $(\V_1^*,\V_1^*,...,\V_n^*)$ in $\K$ such that the tuple  $(\Tc_1,\Tc_2,...,\Tc_n)$ of contractive semigroups is jointly unitarily equivalent to the tuple of compression semigroups $(P_\Q\V_1|_\Q,...,P_\Q\V_n|_\Q).$

For $j=1,2,...,n,$ let  $M_{\z_j}$ denote the multiplication operator by the coordinate function $\z_j$ on the Hardy space of the polydisc $H^2(\D^n).$ The main result in this subsection is \cref{dilation}. It is a generalization of a result (stated below) appeared in \cite{Curto-Vasilescu,TB-EKN-JS,Muller-Vasilescu,Sarkar-Springer}. For  a tuple $\textbf{T}=(T_1,T_2,...,T_n)$ of doubly commuting pure contractions on $\H,$ let $\mathcal{D}_{\textbf{T}^*}=\overline{\ran}({\prod_{j=1}^n(I-T_jT_j^*))^\frac{1}{2}}.$ 

\begin{thm}\label{TB-EKN-JB}
Let  $\textbf{T}=(T_1,T_2,...,T_n)$ be a tuple of doubly commuting pure contractions on $\H.$ Then $(M_{\z_1}\otimes I_{\mathcal{D}_{\textbf{T}^*}},M_{\z_2}\otimes I_{\mathcal{D}_{\textbf{T}^*}},...,M_{\z_n}\otimes I_{\mathcal{D}_{\textbf{T}^*}})$ on $H^2(\D^n)\otimes\mathcal{D}_{\textbf{T}^*}$ is a  minimal isometric dilation of $\textbf{T}.$ That is, there exist a joint invariant subspace $\Q$ for $(M_{\z_1}^*\otimes I_{\mathcal{D}_{\textbf{T}^*}},M_{\z_2}^*\otimes I_{\mathcal{D}_{\textbf{T}^*}},...,M_{\z_n}^*\otimes I_{\mathcal{D}_{\textbf{T}^*}})$ in $H^2(\D^n)\otimes\mathcal{D}_{\textbf{T}^*}$ and a unitary $\omega: \H\to \Q$ such that 
\[\omega T_j\omega^* =P_\Q (M_{\z_j}\otimes I_{\mathcal{D}_{\textbf{T}^*}})|_\Q \text{ for } j=1,2,...,n\] and
 \[H^2(\D^n)\otimes\mathcal{D}_{\textbf{T}^*}=\spncl\{({M_{\z_1}^{m_1}M_{\z_2}^{m_2}\cdots M_{\z_n}^{m_n}\otimes I_{\mathcal{D}_{\textbf{T}^*}})(\Q):m_j\in \Z_+ \text{ for }1\le j\le n}\}.\]	
\end{thm}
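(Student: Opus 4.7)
The plan is to build the dilation map explicitly via a multi-variable Poisson-type formula and verify the four required properties: isometry, intertwining, compression, and minimality.

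First, double commutation guarantees that the positive operators $I-T_jT_j^*$ commute pairwise and that each of them commutes with $T_i$ for $i\ne j$. Hence $\prod_{j=1}^{n}(I-T_jT_j^*)$ is a well-defined positive operator; let $D=\bigl(\prod_{j=1}^{n}(I-T_jT_j^*)\bigr)^{1/2}$, so $\mathcal{D}_{\textbf{T}^*}=\overline{\ran}\,D$. For $m=(m_1,\ldots,m_n)\in\Z_+^n$, write $\textbf{T}^{*m}=(T_1^*)^{m_1}\cdots(T_n^*)^{m_n}$ and $\z^m=\z_1^{m_1}\cdots\z_n^{m_n}$.

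The cornerstone is the Parseval-type identity
\begin{equation*}
\sum_{m\in\Z_+^n}\|D\,\textbf{T}^{*m}h\|^{2}=\|h\|^{2}\qquad(h\in\H).
\end{equation*}
I would prove this by induction on $n$. For $n=1$ it is the familiar telescoping $\sum_{k\ge0}\|(I-T_1T_1^*)^{1/2}T_1^{*k}h\|^{2}=\|h\|^{2}-\lim_{k}\|T_1^{*k}h\|^{2}$, which equals $\|h\|^{2}$ by purity of $T_1$. In the induction step, use that $I-T_nT_n^*$ (and hence its square root) commutes with each $T_j$ for $j<n$ to move that factor past $(T_1^*)^{m_1}\cdots(T_{n-1}^*)^{m_{n-1}}$; summing the inner sum over $m_n$ by the $n=1$ identity applied to $T_n$, and the outer sum over $(m_1,\ldots,m_{n-1})$ by the inductive hypothesis applied to the doubly commuting pure tuple $(T_1,\ldots,T_{n-1})$, yields $\|h\|^{2}$.

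With the identity in hand, define $\omega:\H\to H^{2}(\D^{n})\otimes\mathcal{D}_{\textbf{T}^*}$ by
\begin{equation*}
\omega(h)=\sum_{m\in\Z_+^n}\z^{m}\otimes D\,\textbf{T}^{*m}h.
\end{equation*}
Orthonormality of $\{\z^{m}\}$ in $H^{2}(\D^n)$ together with the identity above gives $\|\omega(h)\|=\|h\|$, so $\omega$ is an isometry. The intertwining $\omega T_j^{*}=(M_{\z_j}^{*}\otimes I_{\mathcal{D}_{\textbf{T}^*}})\omega$ is a bare re-indexing: $M_{\z_j}^{*}$ drops $m_j$ by one and annihilates the $m_j=0$ terms, which is precisely the effect of $T_j^{*}$ applied inside the sum. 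Taking $\Q:=\omega(\H)$ and viewing $\omega:\H\to\Q$ as a unitary then yields $\omega T_j\omega^{*}=P_{\Q}(M_{\z_j}\otimes I_{\mathcal{D}_{\textbf{T}^*}})|_{\Q}$.

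For minimality, set $\mathcal{R}=\bigvee_{k\in\Z_+^n}(M_{\z_1}^{k_1}\cdots M_{\z_n}^{k_n}\otimes I)(\Q)$. This space is $M_{\z_j}$-invariant by construction and $M_{\z_j}^{*}$-invariant as well: if $k_j\ge1$, then $M_{\z_j}^{*}M_{\z_1}^{k_1}\cdots M_{\z_n}^{k_n}=M_{\z_1}^{k_1}\cdots M_{\z_j}^{k_j-1}\cdots M_{\z_n}^{k_n}$; if $k_j=0$, then $M_{\z_j}^{*}$ commutes past the other $M_{\z_i}^{k_i}$ and $M_{\z_j}^{*}\omega(h)=\omega(T_j^{*}h)\in\Q$. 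Hence $\mathcal{R}$ is reducing for the doubly commuting pure isometric tuple $(M_{\z_1}\otimes I,\ldots,M_{\z_n}\otimes I)$, so the standard structure of reducing subspaces for such a tuple (compare \cite{Sarkar}) forces $\mathcal{R}^\perp=H^{2}(\D^n)\otimes\mathcal{F}$ for some closed subspace $\mathcal{F}\subseteq\mathcal{D}_{\textbf{T}^*}$. But $\xi\in\mathcal{F}$ means $\langle 1\otimes\xi,\omega(h)\rangle=\langle\xi,Dh\rangle=0$ for every $h\in\H$, so $\mathcal{F}$ is orthogonal to the dense subspace $D(\H)$ of $\mathcal{D}_{\textbf{T}^*}$; thus $\mathcal{F}=\{0\}$ and $\mathcal{R}=H^{2}(\D^n)\otimes\mathcal{D}_{\textbf{T}^*}$. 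The main obstacle is the Parseval identity, where one must repeatedly and carefully invoke double commutation to move defect-operator factors past the $T_j^{*}$'s in the right order.
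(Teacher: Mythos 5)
Your proposal is correct, but note that the paper itself does not prove \cref{TB-EKN-JB}: it is quoted as a known result from \cite{Curto-Vasilescu,TB-EKN-JS,Muller-Vasilescu,Sarkar-Springer} and used only as an input to \cref{dilation}. What you have written is essentially the standard argument from those references (closest to \cite{TB-EKN-JS}): the joint defect operator $D$, the telescoping/purity (Poisson--Parseval) identity, the explicit isometry $h\mapsto\sum_{m}\z^{m}\otimes D\,\textbf{T}^{*m}h$, the re-indexing intertwining, and minimality via the structure of jointly reducing subspaces of the multishift (your appeal to \cite{Sarkar} for $\mathcal R^{\perp}=H^{2}(\D^{n})\otimes\mathcal F$ and then $\mathcal F\perp\ran D$ is fine). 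One bookkeeping point in your induction: with the summand rearranged as $\|D_{n-1}T_1^{*m_1}\cdots T_{n-1}^{*m_{n-1}}(I-T_nT_n^*)^{1/2}T_n^{*m_n}h\|^{2}$, the $n=1$ identity cannot be applied to the inner sum over $m_n$ as stated, since the operator in front is not isometric; either sum first over $(m_1,\ldots,m_{n-1})$ by the inductive hypothesis applied to the vector $(I-T_nT_n^*)^{1/2}T_n^{*m_n}h$ and then over $m_n$, or commute the $j<n$ factors to the right instead. Since all terms are nonnegative, the order of summation is immaterial, so this is a one-line fix rather than a gap.
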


Let $(\Tc_1,\Tc_2,...,\Tc_n)$ be a tuple of doubly commuting pure semigroups on $\H$ (i.e., each $\Tc_j$ is a pure contractive semigroup on $\H$ and the tuple $(\Tc_1,\Tc_2,...,\Tc_n)$ is doubly commuting). 
 As an easy application of \cref{TB-EKN-JB} to the tuple of cogenerators, we show that the tuple $(\Tc_1,\Tc_2,...,\Tc_n)$ can be dilated to a  tuple of doubly commuting c.n.u. (pure) isometric semigroups, and the obtained dilation is minimal (\cref{dilation}). To show the minimality of the dilation, we need the following lemma.
\begin{lemma}\label{lem:min}
	Let $\Tc_j=(T_{j,t})_{t\ge 0}, j=1,2,...,n$ be  commuting contractive semigroups on $\K.$ Let $T_j$ be the cogenerator of $\Tc_j$ for $j=1,2,...,n.$   Suppose $\H$ is a closed subspace of $\K.$  Then, 
	\begin{equation*}
		\spncl\{T_{1,t_1}T_{2,t_2}\cdots T_{n,t_n}h:h\in \H,t_j\in \R_+\text{ for }j=1,2,...,n\}=\K
	\end{equation*} if and only if 
	\begin{equation*}
		\spncl\{T_1^{m_1}T_2^{m_2}\cdots T_n^{m_n}h:h\in \H,m_j\in \Z_+\text{ for }j=1,2,...,n\}=\K.
	\end{equation*} 
\end{lemma}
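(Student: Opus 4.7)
The plan is to prove the stronger equality
\[
\M_1 := \spncl\{T_{1,t_1}\cdots T_{n,t_n}h : h\in\H,\, t_j\in\R_+\}
\;=\; \spncl\{T_1^{m_1}\cdots T_n^{m_n}h : h\in\H,\, m_j\in\Z_+\} =: \M_2,
\]
from which the lemma follows immediately (both sides equal $\K$ or neither does). So I would not need the hypothesis that either span equals $\K$; the equality $\M_1 = \M_2$ is intrinsic to the way a semigroup determines, and is determined by, its cogenerator.

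For $\M_1\subseteq \M_2$, I would exploit $T_{j,t}=\slim_{r\to 1^-}\varphi_t(rT_j)$. For each fixed $r<1$, the function $\varphi_t$ is bounded and analytic on $\{|z|\le r\}$, so $\varphi_t(rT_j)$ is given by a norm-convergent power series in $T_j$. Hence $\varphi_t(rT_j)\xi\in\spncl\{T_j^k\xi:k\in\Z_+\}$ for every $\xi\in\K$, and taking the strong limit gives $T_{j,t}\xi\in\spncl\{T_j^k\xi:k\in\Z_+\}$. Applying this observation coordinate by coordinate to a product $T_{1,t_1}\cdots T_{n,t_n}h$, and using commutativity of the cogenerators to push the polynomial approximants past the remaining semigroup factors, yields the inclusion.

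For $\M_2\subseteq \M_1$, I would use $T_j=\slim_{t\to 0^+}\phi_t(T_{j,t})$ with $\phi_t(z)=(z-(1-t))/(z-(1+t))$. Since $\|T_{j,t}\|\le 1<1+t$, the Neumann series
\[
(T_{j,t}-(1+t)I)^{-1}=-\frac{1}{1+t}\sum_{k=0}^{\infty}\Bigl(\tfrac{T_{j,t}}{1+t}\Bigr)^k
\]
converges in norm, so $\phi_t(T_{j,t})$ is a norm-convergent power series in $T_{j,t}$; and by the semigroup property, $T_{j,t}^k=T_{j,kt}$. Thus $\phi_t(T_{j,t})\xi\in\spncl\{T_{j,s}\xi:s\in\R_+\}$, and the same inclusion passes to the strong limit to give $T_j\xi\in\spncl\{T_{j,s}\xi:s\in\R_+\}$. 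Iterating over $j$ shows $T_1^{m_1}\cdots T_n^{m_n}h\in\M_1$.

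The only nontrivial point is the bookkeeping of commutativity when one runs the single-variable approximation inside a product of operators. The key observation is: because each $T_i$ commutes with $T_j$ (as the cogenerators of commuting semigroups), $T_i$ commutes with every norm-convergent power series in $T_j$, hence with $\varphi_t(rT_j)$, hence with $T_{j,t}$; symmetrically $T_{i,s}$ commutes with $T_j$. This lets me commute polynomial/semigroup approximants through the other factors at each inductive step, so the iterative argument goes through cleanly in either direction. There are no measure-theoretic or spectral-theoretic subtleties beyond this.
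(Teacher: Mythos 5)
Your argument is correct, and it in fact establishes something slightly stronger than the lemma: the two closed spans coincide as subspaces of $\K$, so the hypothesis that either of them equals $\K$ is never used. The paper proves the same statement by duality: it fixes $k\in\K$ orthogonal to one span and shows $\la \cdot,k\ra$ vanishes on the other, by writing $T_{j,t_j}=\varphi_{t_j}(T_j)$, approximating $\varphi_{t_j,r}(z)=\varphi_{t_j}(rz)$ uniformly on $\D$ by polynomials (and, in the converse direction, approximating $\phi_t^{m}$ by polynomials in $T_{j,t}$), and then interchanging the iterated limits with the inner product. The analytic ingredients are identical to yours --- the Taylor expansion of $\varphi_t(rT_j)$ and the Neumann-series representation of $\phi_t(T_{j,t})$, together with the strong limits defining the semigroup from the cogenerator and vice versa --- but you organize them as norm/strong approximation statements giving direct membership in closed spans, rather than as an annihilator computation. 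What your packaging buys is the intrinsic equality of the two spans (hence the lemma for an arbitrary generating subspace, with no limit-interchange bookkeeping inside inner products); what the paper's packaging buys is that one never needs to track in which closed subspace the approximants live, only that certain scalar limits vanish. One small remark: in your first inclusion the appeal to commutativity is not really needed if you replace the factors from the rightmost one outward (the remaining semigroup factors simply act as a fixed bounded operator on the closed span), while in the second inclusion commutativity of the cogenerators with the semigroup elements, which follows as you say from $T_{j,t}=\varphi_t(T_j)$ and the commutation of the cogenerators, is indeed what lets the iteration produce vectors of the ordered form $T_{1,t_1}\cdots T_{n,t_n}h$.
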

\begin{proof}
	Assume that $\spncl\{T_{1,t_1}T_{2,t_2}\cdots T_{n,t_n}h:h\in \H,t_j\in \R_+\text{ for }j=1,2,...,n\}=\K.$ Let $k\in \K$ be such that $$\la T_1^{m_1}T_2^{m_2}\cdots T_n^{m_n}h,k\ra =0 \text{ for all }h\in \H\text{ and } m_j\in \Z_+,j=1,2,...,n.$$
	Then, $\la p_1(T_1)p_2(T_2)\cdots p_n(T_n)h,k\ra =0$ for any polynomials $p_1,p_2,...,p_n$ and $h\in \H.$ 
	
	For $h\in \H$ and $t_1,t_2,...,t_n\in \R_+,$ we have 
	\begin{align*}
		\la T_{1,t_1}T_{2,t_2}\cdots T_{n,t_n} h,k\ra
		&=\la\varphi_{t_1}(T_1)\varphi_{t_2}(T_2)\cdots\varphi_{t_n}(T_n)h,k\ra= \lim_{r\rightarrow 1^-}\left\la \left(\prod_{j=1}^n \varphi_{t_j,r}(T_j)\right)h,k\right\ra\\
		&=\lim_{r\rightarrow 1^-}\left\la\lim_{m\to \infty}\left(\prod_{j=1}^n  p_{m,t_j,r}(T_j)\right) h,k\right\ra\\&=\lim_{r\rightarrow 1^-}\lim_{m\to \infty}\left\la\left(\prod_{j=1}^n  p_{m,t_j,r}(T_j)\right) h,k\right\ra=0,	\end{align*}
	where $\varphi_{t,r}$  is an  $H^\infty(\D)$ function defined by $\varphi_{t,r}(z)=\varphi_t(rz)$ for $z\in \D$ and $(p_{m,t,r})_{m\ge 0}$ is a sequence of polynomials such that  $p_{m,t,r}\to \varphi_{t,r}$ uniformly on $\D$ for every fixed $t,r.$ 
	Hence $k=0.$  Thus, $\spncl\{T_1^{m_1}T_2^{m_2}\cdots T_n^{m_n}h:h\in \H,m_j\in \Z_+\text{ for }j=1,2,...,n\}=\K.
	$
	
	Conversely, suppose $\spncl\{T_1^{m_1}T_2^{m_2}\cdots T_n^{m_n}h:h\in \H,m_j\in \Z_+\text{ for }j=1,2,...,n\}=\K.$ Let $k\in \K$ be such that $$\la T_{1,t_1}T_{2,t_2}\cdots T_{n,t_n} h,k\ra =0 \text{ for all }h\in \H\text{ and }t_1,t_2,...,t_n\in \R_+.$$
	Then, $\la q_1(T_{1,t_1})q_2(T_{2,t_2})\cdots q_n(T_{n,t_n})h,k\ra =0$ for any polynomials $q_1,q_2,...,q_n,$ $h\in \H$ and  for any $t_1,t_2,...,t_n\in \R_+.$ For all $h\in \H $ and $m_1,m_2,...,m_n\in \Z_+,$ we have 
	\begin{align*}
		\la T_1^{m_1}T_2^{m_2}\cdots T_n^{m_n}h,k\ra
		&=\lim_{t\to 0^+}\left\la\left( \phi_{t}^{m_1}(T_{1,t})\phi_{t}^{m_2}(T_{2,t})\cdots\phi_{t}^{m_n}(T_{n,t})\right) h,k\right\ra\\
		&=\lim_{t\to 0^+}\lim_{m\to \infty}\left\la \left(q_{m,t,m_1}(T_{1,t})q_{m,t,m_2}(T_{2,t})\cdots q_{m,t,m_n}(T_{n,t})\right)h,k\right\ra\\&=0
	\end{align*}
	where $\phi_t^n\in H^\infty(\D)$ is defined by  $\phi_t^n(z)=(\phi_t(z))^n=(z-(1-t))^n((z-(1+t))^{-1})^n$ and $(q_{m,t,n})_{m\ge 0}$ is a sequence of polynomials such that $q_{m,t,n}\to \phi_t^n$ uniformly on $\D$ as $m\to \infty$ for each fixed $t,n.$ This shows $k=0$ to complete the proof.
\end{proof}

It is well known that the space $H^2(\D^n)$ is identified with the tensor product of the Hardy space ${{H^2(\D)}^\otimes}^n,$ via., $\z_1^{m_1}\z_2^{m_2}\cdots \z_n^{m_n}\mapsto \z^{m_1}\otimes \z^{m_2}\otimes\cdots \otimes \z^{m_n}$ for $m_1,m_2,...,m_n\in \Z_+.$ Consider  the multi-shift-semigroup $(\S_1,\S_2,...,\S_n)$ on $L^2(\R_+^n).$ Let $S_j$ denote the cogenerator of $\S_j$ for $j=1,2,...,n.$ Let $W_\C:L^2(\R_+)\to H^2(\D)$ be the unitary defined in \cref{shift}. Then,  ${{W_\C}^\otimes}^n: {{L^2(\R_+)}^\otimes}^n\to {{H^2(\D)}^\otimes}^n$ is a unitary. Using this unitary map and the natural unitaries from $ L^2(\R^n_+)\rightarrow{{L^2(\R_+)}^\otimes}^n$  and  ${{H^2(\D)}^\otimes}^n\rightarrow H^2(\D^n),$ one can see that the tuple $(S_1,S_2,...,S_n)$ on $L^2(\R^n_+)$ is jointly unitarily equivalent to $(M_{\z_1},M_{\z_2},...,M_{\z_n})$ on $H^2(\D^n).$ Now we are ready for the dilation theorem.

\begin{thm}\label{dilation}
Let $(\Tc_1,\Tc_2,...,\Tc_n)$ be a tuple of doubly commuting pure contractive semigroups on $\H.$ Then, there exists a Hilbert space $\E,$ so that $(\S_1\otimes I_\E,\S_2\otimes I_\E,...,\S_n\otimes I_\E)$ is a minimal isometric dilation of $(\Tc_1,\Tc_2,...,\Tc_n).$ That is, there exist a joint invariant subspace $\Q$ for $(\S_1^*\otimes I_\E,...,\S_n^*\otimes I_\E)$ in $L^2(\R_+^n)\otimes \E$ and a unitary $\Omega:\H\to \Q$ such that 
\[\Omega\Tc_j\Omega^*=P_\Q(\S_j\otimes I_\E)|_\Q\text{ for }j=1,2,...,n\] and
\[L^2(\R_+^n)\otimes\E=\spncl\{{(S_{1,t_1}S_{2,t_2}\cdots S_{n,t_n}\otimes I_\E)(\Q):t_j\in \R_+ \text{ for }1\le j\le n}\}.\]		
\end{thm}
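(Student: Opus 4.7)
The plan is to reduce the semigroup dilation problem to its discrete (cogenerator) counterpart, namely Theorem \ref{TB-EKN-JB}, and then transfer the resulting dilation back to the semigroup level via the standard cogenerator--semigroup dictionary together with Lemma \ref{lem:min}.

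First, let $T_j$ be the cogenerator of $\Tc_j$ for $j=1,\ldots,n$. Because an $n$-tuple of semigroups is doubly commuting iff the cogenerators are doubly commuting, and a contractive semigroup is pure iff its cogenerator is a pure contraction, the tuple $\textbf{T}=(T_1,\ldots,T_n)$ is a tuple of doubly commuting pure contractions on $\H$. Apply Theorem \ref{TB-EKN-JB} with $\E:=\mathcal D_{\textbf{T}^*}$ to obtain a joint invariant subspace $\Q$ for $(M_{\z_1}^*\otimes I_\E,\ldots,M_{\z_n}^*\otimes I_\E)$ in $H^2(\D^n)\otimes \E$ and a unitary $\omega:\H\to \Q$ such that
\[\omega T_j\omega^*=P_\Q (M_{\z_j}\otimes I_\E)\big|_\Q,\qquad j=1,\ldots,n,\]
with $\spncl\{(M_{\z_1}^{m_1}\cdots M_{\z_n}^{m_n}\otimes I_\E)(\Q):m_j\in\Z_+\}=H^2(\D^n)\otimes \E$.

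Next, use the natural unitary $U: L^2(\R_+^n)\otimes\E\to H^2(\D^n)\otimes\E$ obtained by composing the $n$-fold tensor $W_\C^{\otimes n}$ (from \cref{shift}) with the natural identifications $L^2(\R_+^n)\simeq L^2(\R_+)^{\otimes n}$ and $H^2(\D)^{\otimes n}\simeq H^2(\D^n)$. As recalled right before the theorem, $U$ intertwines $(S_j\otimes I_\E)$ with $(M_{\z_j}\otimes I_\E)$ for every $j$. Setting $\Q':=U^*(\Q)$ and $\Omega:=U^*\omega$, we get a joint $(S_j^*\otimes I_\E)$-invariant subspace $\Q'\subseteq L^2(\R_+^n)\otimes \E$ and a unitary $\Omega:\H\to \Q'$ with
\[\Omega T_j\Omega^*=P_{\Q'}(S_j\otimes I_\E)\big|_{\Q'},\qquad j=1,\ldots,n.\]

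To pass from the cogenerators to the semigroups, apply \cref{inv/red} (and \cite[Lemma 2.4]{Fact}) to the compressions: since $\Q'$ is joint invariant under each $(S_j\otimes I_\E)^*$, the compression semigroup $(P_{\Q'}(S_{j,t}\otimes I_\E)|_{\Q'})_{t\ge 0}$ is the contractive semigroup with cogenerator $P_{\Q'}(S_j\otimes I_\E)|_{\Q'}$; by uniqueness of the semigroup attached to a cogenerator we conclude
\[\Omega T_{j,t}\Omega^*=P_{\Q'}(S_{j,t}\otimes I_\E)\big|_{\Q'}\quad\text{for all }t\ge 0, \ j=1,\ldots,n,\]
which is exactly the dilation statement.

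Finally, for minimality: Theorem \ref{TB-EKN-JB} gives the discrete minimality
\[\spncl\{(M_{\z_1}^{m_1}\cdots M_{\z_n}^{m_n}\otimes I_\E)(\Q):m_j\in\Z_+\}=H^2(\D^n)\otimes\E,\]
which, after conjugating by $U$, becomes
\[\spncl\{(S_1^{m_1}\cdots S_n^{m_n}\otimes I_\E)(\Q'):m_j\in\Z_+\}=L^2(\R_+^n)\otimes\E.\]
Now apply \cref{lem:min} to the commuting isometric semigroups $(S_j\otimes I_\E)_{t\ge 0}$ on $\K=L^2(\R_+^n)\otimes\E$ with closed subspace $\H_0=\Q'$ (the hypothesis of commutativity is what makes the polynomial approximations in the lemma commute the right way); this yields the continuous form of minimality,
\[\spncl\{(S_{1,t_1}S_{2,t_2}\cdots S_{n,t_n}\otimes I_\E)(\Q'):t_j\in\R_+\}=L^2(\R_+^n)\otimes\E,\]
completing the proof. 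The only nontrivial step is the transfer of minimality from monomials in the cogenerators to the semigroup elements, and that is exactly the content of \cref{lem:min}; the rest is a bookkeeping application of \cref{TB-EKN-JB} combined with the cogenerator--semigroup identifications already set up.
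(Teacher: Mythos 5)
Your proposal is correct and follows essentially the same route as the paper: pass to the cogenerators, invoke \cref{TB-EKN-JB} together with the unitary identification of $(M_{\z_1},\ldots,M_{\z_n})$ on $H^2(\D^n)$ with $(S_1,\ldots,S_n)$ on $L^2(\R_+^n)$, transfer compressions back to the semigroup level via \cref{inv/red} and \cite[Lemma 2.4]{Fact}, and convert the discrete minimality to the continuous one by \cref{lem:min}. The only difference is that you spell out the transfer unitary $U$ explicitly, which the paper leaves implicit.
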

\begin{proof}
	Let $T_j$ be the cogenerator of $\Tc_j$ for $j=1,2,...,n.$ Then $(T_1,T_2,...,T_n)$ is a tuple of doubly commuting pure contractions. By \cref{TB-EKN-JB}, and the fact that the tuple  $(M_{\z_1},M_{\z_2},...,M_{\z_n})$ on $H^2(\D^n)$ is jointly unitarily equivalent to  $(S_1,S_2,...,S_n)$ on $L^2(\R^n_+),$
	   there exist a Hilbert space $\E,$ a joint invariant subspace $\Q$ of $(S_1^*\otimes I_\E,S_2^*\otimes I_\E,...,S_n^*\otimes I_\E)$ in $L^2(\R^n_+)\otimes \E$ and a unitary $\Omega:\H\to \Q$ such that
	\[\Omega T_j\Omega^*=P_\Q(S_j\otimes I_\E)|_\Q \text{ for }j=1,2,...,n\]
	 and 
	\begin{equation}\label{mini}
		L^2(\R_+^n)\otimes\E=\spncl\{{(S_1^{m_1}S_2^{m_2}\cdots S_n^{m_n}\otimes I_\E)(\Q):m_j\in \Z_+ \text{ for }1\le j\le n}\}
	\end{equation}
	where $S_j$ is the cogenerator of $\S_j.$ 
	
	Since $\Q$ is invariant under the cogenerator $S_j^*\otimes I_\E,$ it is invariant under the semigroup $\S_j^*\otimes I_\E$ for all $j=1,2,...,n.$ 	Note that $P_\Q(\S_j\otimes I_\E)|_\Q$ is the contractive semigroup with the cogenerator $P_\Q(S_j\otimes I_\E)|_\Q.$ Therefore, (by \cite[Lemma 2.4]{Fact}) 
	\[\Omega \Tc_j\Omega^*=P_\Q(\S_j\otimes I_\E)|_\Q \text{ for }j=1,2,...,n.\]
By \cref{lem:min}, from \cref{mini}, we have
	\[L^2(\R_+^n)\otimes\E=\spncl\{{(S_{1,t_1}S_{2,t_2}\cdots S_{n,t_n}\otimes I_\E)(\Q):t_j\in \R_+ \text{ for }1\le j\le n}\}.\]	
This completes the proof.
\end{proof}
From the proof of the above theorem one can notice that the Hilbert space $\E$ can be taken as $\E=\mathcal{D}_{\textbf{T}^*}=\overline{\ran}({\prod_{j=1}^n(I-T_jT_j^*))^\frac{1}{2}},$ where $\textbf{T}
=(T_1,T_2,...,T_n)$ and $T_j$ is the cogenerator of $\Tc_j$.

From the above theorem we see that any tuple of doubly commuting pure contractive semigroups is jointly unitarily equivalent to a compression of a multi-shift-semigroup onto a subspace which is joint invariant for $(\S_1^*\otimes I_\E, \S_2^*\otimes I_\E,...,\S_n^*\otimes I_\E).$ However, it is not true that the compression of a multi-shift-semigroup  onto each subspace which is joint invariant for $(\S_1^*\otimes I_\E, \S_2^*\otimes I_\E,...,\S_n^*\otimes I_\E)$, is doubly commuting. Therefore, a natural question is to characterize all the joint invariant subspaces $\Q$ of $(\S_1^*\otimes I_\E, \S_2^*\otimes I_\E,...,\S_n^*\otimes I_\E)$ for which the tuple of compression semigroups  $(P_\Q(\S_1\otimes I_\E)|_\Q, P_\Q(\S_2\otimes I_\E)|_\Q,...,P_\Q(\S_n\otimes I_\E)|_\Q)$ is doubly commuting.

 When $\E=\C,$ we characterize the form of $\Q$ (in \cref{rem}) using the following result of Jaydeb Sarkar (\cite{Sarkar-Jordan}): {\em Let $\Q$ be a closed subspace of $H^2(\D^n).$  Then, $\Q$ is  a joint invariant subspace for $(M_{\z_1}^*,M_{\z_2}^*,...,M_{\z_n}^*)$ and $(P_\Q M_{\z_1}|_\Q,P_\Q M_{\z_2}|_Q,..., P_\Q M_{\z_n}|_\Q)$ is doubly commuting if and only if $Q$ has the form $\Q=\Q_1\otimes\Q_2\otimes\cdots \otimes \Q_n,$ where each $\Q_j$ is an invariant subspace of $M_{\z}^*$ in  $H^2(\D).$}  The result for $n=2$ was previously proven in \cite{I-N-S}. It is unknown whether a generalization to the vector-valued case of Jaydeb Sarkar's result exists, at least to the authors' knowledge.

 Let $\Q $ be a closed subspace of $L^2(\R_+^n).$ Firstly, note that $\Q$ is a joint invariant subspace for $(\S_1^*,\S_2^*,...,\S_n^*)$  such that $(P_\Q\S_1|_\Q,P_\Q\S_2|_\Q,...,P_\Q\S_n|_\Q)$ is doubly commuting if and only if $\Q$ is a joint invariant subspace for $(S_1^*,S_2^*,...,S_n^*)$ 
such that $(P_\Q S_1|_\Q,P_\Q S_2|_\Q,...,P_\Q S_n|_\Q)$ is doubly commuting. Secondly, recall that the tuple of cogenerators $(S_1,S_2,...,S_n)$ on $L^2(\R_+^n)$  is jointly unitarily equivalent to $(M_{\z_1},M_{\z_2},...,M_{\z_n})$ on $H^2(\D^n).$ Now using the aforementioned result of Jaydeb Sarkar we get: (recall that, we identify the spaces $L^2(\R_+^n)$ and ${{H^2(\D)}^\otimes}^n$ with ${{L^2(\R_+)}^\otimes}^n$ and $H^2(\D^n)$, respectively)
\begin{remark}\label{rem}
	Let $\Q$ be a closed subspace in $L^2(\R_+^n).$ The space  $\Q$ is a joint invariant subspace for $(\S_1^*,...,\S_n^*)$ and  $(P_\Q\S_1|_\Q,...,P_\Q\S_n|_\Q)$ is doubly commuting if and only if  $\Q$ has the form $\Q=W_\C^*(\Q_1)\otimes W_\C^*(\Q_2)\otimes\cdots\otimes W_\C^*(\Q_n)$ where each $\Q_j$ is an invariant subspace of $M_\z^*$ in $H^2(\D)$ and $W_\C$ is the unitary defined in \cref{shift}.
\end{remark}


	\Addresses
	
\end{document}